\documentclass[12pt]{amsart}
\usepackage{float} 
\usepackage{amsmath,amsthm,amssymb,enumitem,verbatim,stmaryrd,xcolor,microtype,graphicx,aliascnt,needspace,mathtools}
\usepackage[T1]{fontenc}
\usepackage[utf8]{inputenc}
\usepackage[english]{babel} % allow for hyphenation for words with dash: e.g. ``non-archimedean'' does't break, but ``non\hyph archimedean'' does
\usepackage[top=3.5cm,bottom=3.45cm,left=3.5cm,right=3.5cm]{geometry}
\usepackage[bookmarksopen=true,bookmarksopenlevel=2,bookmarksdepth=2,linktoc=page,colorlinks,linkcolor={red!70!black},citecolor={red!80!black},urlcolor={blue!80!black},pdfpagemode=UseOutlines,pdfstartview={fitH}]{hyperref}
\usepackage{longtable}

\usepackage{tikz}
\usepackage{tikz-cd}
\pgfrealjobname{points-for-semirings}% the jobname has to coincide with the file name without the extension .tex in order to precompile images from tikz. To pre-compile images, change to the directory of this file, create a subfolder ./tikz and enter
\usepackage[mathcal]{euscript}                               % use the euscript caligraphic font with \mathscr{...}

\usepackage[colorinlistoftodos,bordercolor=orange,backgroundcolor=orange!20,linecolor=orange,textsize=footnotesize]{todonotes} \makeatletter \providecommand \@dotsep{5} \def\listtodoname{List of Todos} \def\listoftodos{\@starttoc{tdo}\listtodoname} \makeatother %\Todo{} for margin notes, suppress in pdf with option [disable]

\usepackage{newtxtext,newtxmath}                              % times font
\usepackage{bm}

\allowdisplaybreaks % allows equation environments to break between the pages

\usepackage{etoolbox}
\makeatletter
\patchcmd{\@startsection}{\@afterindenttrue}{\@afterindentfalse}{}{}             %omit indentation of the first paragraph of a section
\patchcmd{\part}{\bfseries}{\bfseries\LARGE}{}{}
\patchcmd{\section}{\scshape}{\bfseries}{}{}\renewcommand{\@secnumfont}{\bfseries} %boldface no smallcaps section and subsection titles with numbers
\patchcmd{\@settitle}{\uppercasenonmath\@title}{\large}{}{}
\patchcmd{\@setauthors}{\MakeUppercase}{}{}{}
\addto{\captionsenglish}{} %boldface no smallcaps Abstract
\addto{\captionsenglish}{} %boldface no smallcaps Figure
\addto{\captionsenglish}{} %boldface no smallcaps Table
\makeatother

\usepackage{fancyhdr}

\addto\extrasenglish{}  % Use capitalized ``Section'' (also for (sub)subsections
\theoremstyle{plain}
\newtheorem{thm}{Theorem}[section] % provides command \autoref{}, which produces citations like ``Theorem 1.1''.
\newtheorem{thmx}{Theorem} % provides command \autoref{}, which produces citations like ``Theorem 1.1''.
\newaliascnt{lemma}{thm}\newtheorem{lemma}[lemma]{Lemma}\aliascntresetthe{lemma}
\newaliascnt{cor}{thm}\newtheorem{cor}[cor]{Corollary}\aliascntresetthe{cor}
\newaliascnt{prop}{thm}\aliascntresetthe{prop}
\newaliascnt{claim}{thm}\newtheorem{claim}[claim]{Claim}\aliascntresetthe{claim}
\newtheorem*{claim*}{Claim}
\newtheorem*{thm*}{Theorem}
\newtheorem*{lem*}{Lemma}
\newtheorem*{cor*}{Corollary}

\theoremstyle{definition}
\newaliascnt{df}{thm}\aliascntresetthe{df}
\newaliascnt{rem}{thm}\aliascntresetthe{rem}
\newaliascnt{ex}{thm}\aliascntresetthe{ex}
\newaliascnt{conj}{thm}\aliascntresetthe{conj}
\newaliascnt{problem}{thm}\aliascntresetthe{problem}

\newtheorem*{df*}{Definition}
\newtheorem*{ex*}{Example}
\newtheorem*{rem*}{Remark}

\theoremstyle{remark}

\DeclareRobustCommand{\gobblefour}[5]{}    % Command \SkipTocEntry for suppressing a section title in TOC
\DeclareMathOperator{\Aff}{Aff}

\DeclareMathOperator{\Sch}{Sch}
\DeclareMathOperator{\Sp}{Sp}
\DeclareMathOperator{\Spec}{Spec}
\DeclareMathOperator{\PSpec}{PSpec}
\DeclareMathOperator{\MSpec}{MSpec}
\DeclareMathOperator{\Sh}{Sh}
\DeclareMathOperator{\PrSh}{PrSh}

\DeclareMathOperator{\colim}{colim\,}

\DeclareMathOperator{\SRings}{{SRings}}

\DeclareMathOperator{\Sym}{{Sym}}

\newcommand{\N}{{\mathbb N}}

\newcommand{\cC}{{\mathcal C}}

\newcommand{\cI}{{\mathcal I}}
\newcommand{\cJ}{{\mathcal J}}

\newcommand{\cO}{{\mathcal O}}
\newcommand{\cP}{{\mathcal P}}
\newcommand{\cQ}{{\mathcal Q}}

\newcommand{\cS}{{\mathcal S}}
\newcommand{\cT}{{\mathcal T}}
\newcommand{\cU}{{\mathcal U}}
\newcommand{\cV}{{\mathcal V}}
\newcommand{\cW}{{\mathcal W}}

\newcommand{\fp}{{\mathfrak p}}

\newcommand{\fs}{{\mathfrak s}}

\newcommand{\0}{{\mathbf{0}}}
\newcommand{\1}{{\mathbf{1}}}

\newcommand{\id}{\textup{id}}

\renewcommand{\top}{\textup{top}}

\newcommand{\op}{\textup{op}}

\renewcommand{\leq}{\leqslant}
\newcommand{\gen}[1]{\langle #1 \rangle}

\renewcommand{\emptyset}\varnothing

\title{Two comparison theorems for semiring schemes}

\author{Oliver Lorscheid}
\address{\rm Oliver Lorscheid, University of Groningen, the Netherlands}
\email{o.lorscheid@rug.nl}

\hypersetup{pdftitle={Two comparison theorems for semiring schemes},pdfauthor={Oliver Lorscheid}}

\thanks{The author thanks Alejandro Mart\'inez M\'endez for his comments on an early draft of this paper.}

\begin{document}

\begin{abstract}
 In this note, we compare the two approaches to semiring schemes as topological spaces with a structure sheaf (\cite{GKMX}) and as a functor of points (\cite{BLMV}). We explain and prove the following two results: (1) the topological space can be recovered from the functor of points; (2) the two notions of semiring schemes are canonically equivalent as categories.
\end{abstract}

\maketitle

%%%%%%%%%%%%%%%%%%%%%%%%%%%%%%%%%%%%%%%%%%%%%%%%%%%%%%%%%%%%%%%%%%%%%%%%%%%%%%%%%%%%%%%%%%%%%%%%%%%%%%%%%%%%%%%%%%%%%%%%%%%%%%%%%%%%%%%%%%%%%%%%%%%%%%%%%%%%%%%%%%%%%
%%%%%%%%%%%%%%%%%%%%%%%%%%%%%%%%%%%%%%%%%%%%%%%%%%%%%%%%%%%%%%%%%%%%%%%%%%%%%%%%%%%%%%%%%%%%%%%%%%%%%%%%%%%%%%%%%%%%%%%%%%%%%%%%%%%%%%%%%%%%%%%%%%%%%%%%%%%%%%%%%%%%%

\section*{Introduction}
\label{introduction}

Let $\SRings$ be the category of commutative semirings $R$ with $0$ and $1$. The earlier chapters \cite{BLMV} and \cite{GKMX} of this volume define the spectrum $\Spec R$ in two different ways: once as a functor of points, which is a Zariski sheaf on the opposite category $\Aff=\SRings^\op$ of $\SRings$; and once as the prime spectrum, which is a topological space $X=\PSpec R$ together with a structure sheaf $\cO_X$ of semirings. This yields two approaches to semiring schemes.

In this text, we explain the proofs of two comparison theorems for semiring schemes, which can be stated as follows (for a concise formulation, see \autoref{thmA} and \autoref{thmB} below).

\subsection*{First comparison theorem}
\textit{The prime spectrum $\PSpec R$ of a semiring $R$ in the sense of \cite{GKMX} is canonically identified with the Stone dual $\Lambda_R^\top$ of the locale $\Lambda_R$ of all open subschemes of $\Spec R$ in the sense of \cite{BLMV}.}

\subsection*{Second comparison theorem}
\textit{There is a unique equivalence between the categories of semiring schemes in the senses of \cite{BLMV} and \cite{GKMX} that preserves affine open coverings.}

\medskip

The first comparison result follows in essence from Marty's characterization (in \cite{Marty07}) of points of certain types of relative schemes in the sense of To\"en-Vaqui\'e (\cite{Toen-Vaquie09}), which applies, in particular, to semiring schemes. However, this proof takes rather the scenic route for our purposes: it requires to identify To\"en-Vaqui\'e's relative schemes with certain instances of $\fs$-schemes in the sense of \cite{BLMV} and relies on the comparatively complicated proof of Marty.

In this note, we give a short independent proof of the first comparison theorem, which is tailored to semiring schemes and which builds up directly on the concepts of \cite{BLMV} and \cite{GKMX}. The second comparison theorem is folklore, without a rigorous proof in the literature.

\section{The two comparison theorems}
\label{section: the two comparison theorems}

In the following, we review the approaches to semiring schemes from \cite{BLMV} and \cite{GKMX}, and we explain a concise formulation of the comparison theorems.

\subsection{Prime spectra}

We begin with a description of the prime spectrum of a semiring $R$. A \emph{prime ideal of $R$} is an $R$-submodule $\fp$ of $R$ for which $S=R-\fp$ is a \emph{multiplicative subset}, i.e., $S$ is closed under multiplication and $1\in S$.

The \emph{prime spectrum} $X=\PSpec R$ of $R$ is the set of all prime ideals of $R$ together with the topology that is generated by the \emph{principal open subsets}
\[
 U^P_h \ = \ \{\fp\in X\mid h\notin\fp\}
\]
for $h\in R$. It comes equipped with a sheaf $\cO_X$ of semirings (its \emph{structure sheaf}), which is determined by its values $\cO_X(U^P_h)=B[h^{-1}]$ on principal opens and by the corresponding restriction maps, which are the localizations $B[h^{-1}]\to B[g^{-1}]$ whenever $g=ah$ for some $a\in R$.

In particular, taking global sections recovers $R\simeq\cO_X(X)$ (in a functorial way), which shows that $\PSpec$ identifies $\SRings^\op$ with a subcategory $\cC$ of the category of \emph{semiringed spaces}, i.e., topological spaces equipped with a sheaf of semirings.

\subsection{The Zariski site}
\label{subsection: Zariski site}

The spectrum functor in \cite{BLMV} is the tautological anti-equivalence $\Spec:\SRings\to\Aff$ of the category of semirings with its opposite category $\Aff=\SRings^\op$. The Yoneda embedding $\Aff\to\PrSh(\SRings)$ allows us to consider $\Spec R$ as the functor of points, which is a presheaf on $\SRings$, in analogy to usual algebraic geometry.

Let $R$ be a semiring and $X=\Spec R$. The localization $R\to R[h^{-1}]$ of $R$ at a single element $h\in R$ gives rise to the notion of a \emph{principal open} $U_h=\Spec R[h^{-1}]\to X$ in $\Aff$. The \emph{Zariski topology on $\Aff$} is the Grothendieck pretopology $\cT$ on $\Aff$ that consists of all covering families $\cU=\{\Spec U_{h_i}\to X\mid i\in I\}$ of $X$ by principal opens that appear in the canonical topology of $\Aff$. This defines the \emph{Zariski site} $\fs=(\Aff,\cT)$ and an embedding of $\Aff$ into the topos $\Sh_\fs=\Sh(\Aff,\cT)$. In other words, the functor of points $\Spec R$ is a sheaf in the Zariski topology.

\subsection{The first comparison theorem}

The first comparison theorem recovers the prime spectrum from the properties of $X=\Spec R$ as an object of $\fs$.

Let $\cS=\{U_i\to X\}_{i\in I}$ be a family of principal opens $U_i=\Spec R[h_i^{-1}]$ of $X=\Spec R$, which we do not assume to be in $\cT$. Let $U_{i,j}=U_i\times_X U_j$ be the fiber product. The \emph{atlas of $\cS$} is the diagram $\cU_\cS$ that consists all $U_{i}$ and $U_{i,j}$, together with the canonical projections $U_{i,j}\to U_i$ and $U_{i,j}\to U_j$. Then $\cU_\cS$ is an $\fs$-presentation and $W=\colim_{\Sh_\fs}\cU_\cS$ is an $\fs$-scheme in the sense of \cite{BLMV}; also cf.\ \autoref{subsubsection: s-schemes}.

An \emph{open immersion into $X$} is the canonical morphism $W\to X$ from the colimit $W=\colim_{\Sh_\fs}\cU_\cS$ into $X$, where $\cU_\cS$ is the atlas of a family $\cS$ of principal opens. An \emph{open subscheme of $X$} is the isomorphism class $[W\to X]$ of an open immersion $W\to X$. The \emph{locale of opens of $\Spec R$} is the set of all open subschemes $[W\to X]$, which is partially ordered by the rule that $[W\to X]\leq [W'\to X]$ if and only if there exists a morphism $W\to W'$ over $X$. This poset is a locale (or complete lattice), i.e., every subset $\cS$ of $\Lambda_R$ has a unique least upper bound $\bigvee\cS$ and a unique greatest lower bound $\bigwedge\cS$.

Stone duality associates with $\Lambda_R$ the topological space $\Lambda_R^\top$ of all principal prime ideals of $\Lambda_R$, which are proper subsets of the form $\cP=\{w\in\Lambda_R\mid w\leq \bigvee\cP\}$ for which $\cS=\Lambda_R-\cP$ is closed under finite meets. The topology of $\Lambda_R^\top$ is generated by open subsets of the form $U^\Lambda_h=\{\cP\in\Lambda_R^\top\mid u_h\notin\cP\}$, where $u_h=[U_h\to X]$.

\begin{thmx}[first comparison theorem]\label{thmA}
 There is a canonical homeomorphism $\varphi:\PSpec R\to\Lambda_R^\top$ that is functorial in $R$ and that maps a principal open $U^P_h$ of $\PSpec R$ (for $h\in R$) to the principal open $U^\Lambda_h$ of $\Lambda_R^\top$.
\end{thmx}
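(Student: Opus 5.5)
The strategy is to identify $\Lambda_R$ with the locale of open subsets of $\PSpec R$, and then apply Stone duality. The space $\PSpec R$ is sober: every irreducible closed subset has a unique generic point, namely the prime ideal that is the union of the ideals vanishing on it, as in the ring case. Hence $\PSpec R$ is canonically homeomorphic to the space of points of its frame of opens $\cO(\PSpec R)$. For a sober space, the points of the frame correspond to complements of closures of points: $\fp$ corresponds to the principal prime $\{V \text{ open} \mid \fp\notin V\}$, whose join is $\PSpec R-\overline{\{\fp\}}$. So it suffices to construct a canonical isomorphism of locales $\Psi:\Lambda_R\to\cO(\PSpec R)$ with $\Psi(u_h)=U^P_h$. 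Such a $\Psi$ automatically transports principal primes and the basic opens $U^\Lambda_h$ to $U^P_h$, and it yields $\varphi$ with the stated property.

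To construct $\Psi$, I will send an open subscheme $[W\to X]$ given by a family $\cS=\{U_{h_i}\}$ to $\bigcup_i U^P_{h_i}$. The key step is to show that this is well defined and an order embedding, and for that I need the following criterion.

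\emph{Claim.} For families $\{h_i\}$ and $g$, the map $U_g\to X$ factors through $\colim\cU_\cS$ if and only if $U^P_g\subseteq\bigcup_i U^P_{h_i}$.

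I will reduce the Claim to a covering statement: $\{U_{gh_i}\to U_g\}$ is a Zariski covering in $\cT$ if and only if the $U^P_{gh_i}$ cover $U^P_g$. On the topological side, $U^P_g\subseteq\bigcup U^P_{h_i}$ means that every prime avoiding $g$ misses some $h_i$. Equivalently, after localizing at $g$, the ideal generated by the $h_i$ in $R[g^{-1}]$ is not contained in any prime. For semirings this means the ideal contains $1$ only after taking the $k$-closure or saturation; more precisely, a semiring ideal lies in no prime if and only if some product of elements (equivalently, $1$) lies in it. I will use the standard fact that the complement of a maximal ideal disjoint from a multiplicative set is multiplicative, which gives a prime avoiding any multiplicative set disjoint from an ideal. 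On the canonical-topology side, a family of localizations covers if and only if it is jointly conservative and effective, and the principal opens generating the unit ideal give an effective epimorphic family by the usual partition-of-unity argument. That argument must be adapted to semirings, where $\sum a_ih_i^n=1$ still lets one glue compatible elements, because gluing only uses the sum and not subtraction.

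For the converse direction, I will show that if some prime $\fp$ avoids $g$ and all $h_i$, then the family is not covering. Map $R[g^{-1}]$ to the Boolean semiring $\mathbb B$ via the characteristic map $R\to\mathbb B$ of $\fp$. The point $\Spec\mathbb B\to U_g$ does not factor through any $U_{gh_i}$, since $h_i\mapsto 0$ is not invertible, so the family is not even jointly surjective on $\mathbb B$-points. Hence the family does not belong to the canonical topology.

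Given the Claim, $\Psi$ is injective and order-preserving and reflecting, because opens in $\Sh_\fs$ over $X$ are determined by which $U_g$ factor through them, the $U_g$ forming a generating family. $\Psi$ is surjective because every open subset of $\PSpec R$ is a union of $U^P_h$. Functoriality in $R$ holds since both sides pull back $U_h$ and $U^P_h$ to $U_{f(h)}$ and $U^P_{f(h)}$. I expect the main obstacle to be the effectivity half of the covering criterion. One must show that when the $h_i$ generate the unit ideal in $R[g^{-1}]$, the family is a universal effective epimorphism in $\Aff$, i.e.\ that $R[g^{-1}]$ is the equalizer of $\prod R[(gh_i)^{-1}]\rightrightarrows\prod R[(gh_ih_j)^{-1}]$, stably under base change. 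This requires redoing the sheaf-gluing lemma without additive inverses, using the identity $\sum a_ih_i^n=1$ carefully in the cancellation-free setting.
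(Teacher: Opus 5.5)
Your reduction is sound as far as it goes. $\PSpec R$ is sober, and a frame isomorphism $\Lambda_R\cong\cO(\PSpec R)$ with $u_h\mapsto U^P_h$ would give the theorem. Everything then reduces to one criterion: $\{U_{gh_i}\to U_g\}$ lies in $\cT$ if and only if the $h_i$ generate the unit ideal of $R[g^{-1}]$. No $k$-closure is involved here: a proper ideal lies in a maximal ideal, which is prime, cf.\ \cite[Lemma 3.7]{GKMX}.

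The ``if'' direction is fine, and it is not the hard part. The partition-of-unity proof of injectivity and gluing uses only sums, so it works verbatim for semirings.

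\textbf{The gap is in the ``only if'' direction.} You map $R\to\mathbb B$ by the characteristic function of a prime $\fp$. This map is additive only if $a+b\in\fp$ forces $a,b\in\fp$, which fails for primes of semirings in general. For $\fp=2\N\subset\N$ one gets $\chi(1+1)=0\neq 1=\chi(1)+\chi(1)$.

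No $\mathbb B$-point can rescue the argument either. Take $R=\N$, $g=1$ and $\{h_i\}=\{2,3\}$:
\begin{itemize}
\item the $h_i$ generate the proper prime ideal $\N-\{1\}$, so $U^P_2\cup U^P_3\neq\PSpec\N$;
\item but the only homomorphism $\N\to\mathbb B$ makes $2$ invertible, so the unique $\mathbb B$-point lies in $U_2$;
\item even $\N\to\N[1/2]\times_{\N[1/6]}\N[1/3]$ is an isomorphism.
\end{itemize}
So you have not proved that a family of principal opens generating a proper ideal is not a covering. That is the semiring-specific core of the theorem.

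\textbf{The missing idea is the paper's symmetric-algebra test object.} Put $R'=R[g^{-1}]$, let $I\subsetneq R'$ be the ideal generated by the $h_i$, and let $T=\Sym_{R'}(I)$, graded with $T_0=R'$ and $T_1=I$.
\begin{itemize}
\item The elements $[h_i]/h_i\in T[h_i^{-1}]$ are compatible, since $h_j[h_i]=[h_ih_j]=h_i[h_j]$ in $T_1$.
\item If the family were a covering, these elements would glue to some $t\in T$.
\item Comparing degree-one components gives $x\in I$ with $h_i^{n_i}x=h_i^{n_i}$ in $R'$ for all $i$.
\item Injectivity of $R'\to\prod_i R'[h_i^{-1}]$, which is the covering condition for $T=R'$, then forces $x=1\in I$, a contradiction.
\end{itemize}
This is the argument behind the paper's lemma on $\Sym_R(I)$, which the paper applies after localizing at $\fp$. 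With it in place, your sober-space route is a valid alternative to the paper's comparison of both spaces inside $\MSpec R$.
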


\subsection{Semiring schemes}
\label{subsection: semiring schemes}

The earlier chapters \cite{BLMV} and \cite{GKMX} provide two different definitions of semiring schemes based on the previous constructions of the spectrum of a semiring. Our second result shows that the corresponding categories of semiring schemes are canonically equivalent.

\subsubsection{Semiring schemes as semiringed spaces (\cite[section 3.4]{GKMX})}
\label{subsubsection: semiring schemes as semiringed spaces}

A \emph{semiringed space} is a topological space $X$ together with a sheaf $\cO_X$ in $\SRings$. A \emph{local morphism} between semiringed spaces is a continuous map $\varphi:X\to Y$ together with a morphism $\varphi^\#:\varphi^\ast\cO_Y\to\cO_X$ of sheaves that is \emph{local} in the sense that the induced maps $\varphi_x:\cO_{Y,\varphi(x)}\to\cO_{X,x}$ between stalks (for $x\in X$) send non-units to non-units. This defines the category $\Sp_\N$ of semiringed spaces.

An \emph{affine semiring scheme} is a semiringed space that is isomorphic to the prime spectrum $\PSpec R$ of a semiring $R$, as defined above. More precisely, $\PSpec$ defines an anti-equivalence of $\SRings$ with the full subcategory of affine semiring schemes in $\Sp_\N$.

A \emph{semiring scheme}, or \emph{semischeme} in the terminology of \cite{GKMX}, is a semiringed space that has an open covering by affine semiring schemes. We define $\Sch_\N$ as the full subcategory of the category of $\Sp_\N$ whose objects are semiring schemes.

An \emph{open immersion of semiring schemes} is a morphism $\varphi:Y\to X$ of semiring schemes that is an open topological embedding between the underlying topological spaces and for which $\cO_Y$ is the restriction of $\cO_X$ to $Y$, i.e., $\varphi^\#:\varphi^\ast\cO_X\to\cO_Y$ is an isomorphism.

\subsubsection{Semiring schemes as Zariski sheaves (\cite[section 3.2]{BLMV})}
\label{subsubsection: s-schemes}

In order to distinct this approach from the previous one, we use the term $\fs$-scheme in the following, where $\fs=(\Aff,\cT)$ is the Zariski site for $\Aff=\SRings^\op$.

An \emph{affine $\fs$-scheme} is a sheaf in the essential image of the Yoneda embedding $\Aff\to\Sh_\fs$.

A \emph{diagram in $\Aff$} is a functor $\cU:\cI\to\Aff$ from an index category $\cI$ to $\Aff$. We call the images $U_i=\cU(i)$ of objects $i$ of $\cI$ the \emph{objects in $\cU$} and we call the images $\cU(i\to j):U_i\to U_j$ of morphisms $i\to j$ in $\cI$ the \emph{morphisms in $\cU$}. A diagram $\cU$ is \emph{closed under fibre products} if for every pair of morphisms $U_i\to U_k$ and $U_j\to U_k$, the fibre product $U_{ij}=U_i\times_{U_k}U_j$ as well as the two canonical projections are in $\cU$. A diagram $\cU$ is \emph{monodromy free} if for every object $U_i$ in $U$ the canonical inclusion $\iota_i:U_i\to \colim_{\Sh_\fs}\cU$ is a monomorphism in $\Sh_\fs$.

An \emph{$\fs$-presentation} is a monodromy free diagram of principal open immersions in $\Aff$ that is closed under fibre products. An \emph{$\fs$-scheme} is a sheaf $X\in\Sh_\fs$ that is isomorphic to the colimit of an $\fs$-presentation.

\subsection{The second comparison theorem}

The definition of $\fs$-schemes as colimits of $\fs$-presentations allows for a short concise formulation of the comparison theorem.

Let $\Aff_\N$ be the full subcategory of affine semiring schemes in $\Sch_\N$. The composition of the anti-equivalences $\Gamma:\Aff\to\SRings$ and $\PSpec:\SRings\to \Aff_\N$ yields an equivalence of categories $\tau:\Aff\to\Aff_\N$. We write $\tau\cU:\cI\to\Aff_\N$ for the composition $\tau\circ\cU$ of $\tau$ with an $\fs$-presentation $\cU:\cI\to\Aff$.

\begin{thmx}[second comparison theorem]\label{thmB}
 There is an essentially unique extension of $\tau$ to an equivalence $\hat\tau:\Sch_\fs\to\Sch_\N$ that preserves the colimits of $\fs$-presentations, i.e., the canonical morphism $\colim_{\Sch_\N}\tau\cU\to\hat\tau(\colim_{\Sh_\fs}\cU)$ is an isomorphism for every $\fs$-presentation $\cU$ and the canonical inclusions $\iota_i:\tau\cU(i)\to \colim_{\Sch_\N}\tau\cU$ are open immersions.
\end{thmx}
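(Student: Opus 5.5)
The plan is to construct $\hat\tau$ explicitly on colimits of $\fs$-presentations, using \autoref{thmA} to control the topology. Then we show it is fully faithful and essentially surjective, and that uniqueness is forced by the colimit condition.

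\emph{Step 1: gluing in $\Sch_\N$.} Let $\cU:\cI\to\Aff$ be an $\fs$-presentation with objects $U_i=\Spec R_i$. Every morphism in $\cU$ is a principal open immersion $\Spec R[h^{-1}]\to\Spec R$. Its image under $\tau$ is the inclusion of $U^P_h$ into $\PSpec R$, with restricted structure sheaf, so $\tau\cU$ is a diagram of open immersions of affine semiring schemes. Since $\cU$ is closed under fibre products, for any two objects mapping to a common $U_k$ the fibre product $U_{ij}$ is in $\cU$. By \autoref{thmA} and the equality $\PSpec R[h^{-1}]\simeq U^P_h$, the corresponding opens intersect exactly in $\tau(U_{ij})$. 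The main check is that monodromy freeness gives a consistent gluing datum. Monodromy freeness says each $U_i\to W=\colim_{\Sh_\fs}\cU$ is a monomorphism. Hence $U_i\times_W U_j$ is a subsheaf of $U_i$, and it is a union of principal opens coming from the diagram. Passing through \autoref{thmA}, this subsheaf determines an open subset $V_{ij}\subseteq\PSpec R_i$ and an identification $V_{ij}\simeq V_{ji}$ satisfying the cocycle condition. The cocycle condition holds automatically, because all the maps are compatible monomorphisms into a single object $W$. Gluing the $\tau(U_i)$ along these data produces a semiring scheme $X_\cU$. Its charts are open immersions, and it is the colimit $\colim_{\Sch_\N}\tau\cU$, by the usual universal property of gluing sheaves of semirings together with local morphisms.

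\emph{Step 2: the functor.} Define $\hat\tau(W)=X_\cU$ for a chosen presentation $W\simeq\colim\cU$. For a morphism $f:W\to W'$ of $\fs$-schemes, refine $\cU$ so that each composite $U_i\to W\to W'$ factors through principal opens of objects of $\cU'$. This is possible because $\{U_i\times_{W'}U'_j\}$ is a Zariski covering of $U_i$, and each such piece is a union of principal opens of $U_i$. The factorizations give morphisms $\tau(U_i)\to X_{\cU'}$ that agree on overlaps, and hence a morphism $X_\cU\to X_{\cU'}$ by Step 1. Independence of the choices follows by comparing with common refinements.

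\emph{Step 3: equivalence and uniqueness.} For essential surjectivity, take an affine open covering of a semiring scheme $Y$. Refine it by principal opens, and add all pairwise intersections, again refined by principal opens. The result is a diagram $\cU$ of principal open immersions that is closed under fibre products. Its sheaf colimit $W$ is monodromy free: the required mono property follows because $Y$ itself, viewed as the sheaf $\Hom(\tau(-),Y)$ on $\Aff$, receives $W$ compatibly, and each $U_i\to Y$ is a monomorphism. We then get $\hat\tau(W)\simeq Y$. For full faithfulness, we compare $\Hom(W,W')$ and $\Hom(X_\cU,X_{\cU'})$. Both are computed as equalizers of products over the $U_i$, using that $\fs$-schemes are Zariski sheaves and that morphisms of semiring schemes glue. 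This reduces the comparison to the affine case, where $\tau$ is fully faithful. Uniqueness up to unique isomorphism is immediate, since any extension preserving these colimits agrees with $\hat\tau$ on every $W$.

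I expect the main obstacle to be Step 1, namely translating the sheaf-theoretic monodromy freeness and the intersections $U_i\times_W U_j$ into genuine open subsets of prime spectra. This is exactly where \autoref{thmA} is needed: it identifies opens of $\PSpec R$ with open subschemes of $\Spec R$, because $\Lambda_R$ is spatial.
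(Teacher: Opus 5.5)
Your outline is sound, but it reaches \autoref{thmB} by a different route than the paper.

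\emph{The paper's route.} It stays inside the calculus of $\fs$-presentations. Morphisms are realized as colimits of morphisms of presentations after refinement. Well-definedness of $\hat\tau$ rests on common refinements together with \autoref{lemma: image of refinements under tau}, which needs a stricter notion of refinement that the paper omits. Presentations are recognized after applying $\tau$ through an intrinsic characterization of monodromy freeness from \cite{BLMV}. Fullness and faithfulness are proved separately.

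\emph{Your route.} You construct $\colim_{\Sp_\N}\tau\cU$ as an explicit gluing, and you define $\hat\tau$ on morphisms by gluing local morphisms along an open cover of the semiringed space $X_\cU$. In the surjectivity step you prove monodromy freeness by mapping $W$ into the functor of points $\mathrm{Hom}(\tau(-),Y)$. You get full faithfulness by comparing Hom-sets as limits over charts.

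\emph{What each buys.} Yours is a Demazure--Gabriel style comparison. It uses that morphisms in $\Sp_\N$ glue along arbitrary open covers, so it bypasses the refinement machinery, in particular the omitted refinement lemma. It also exhibits $\mathrm{Hom}(\tau(-),-)$ as the natural quasi-inverse. The paper's formulation is the one that carries over to the more general setting of \cite{toolkit}.

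\emph{Places to sharpen.}
\begin{enumerate}
 \item \autoref{thmA} is not what produces $V_{ij}$ and the identification $V_{ij}\simeq V_{ji}$. These come from spans $U_i\leftarrow U_d\to U_j$ in the diagram. Closure under fibre products reduces every zigzag to a span, both for $T$-points of the sheaf colimit and for points of the topological colimit. Monodromy freeness then forces two morphisms $U_d\rightrightarrows U_i$ with the same composite into $W$ to coincide.
 \item Where \autoref{thmA} is genuinely needed is to match covering families in $\cT$ with open covers of prime spectra, in both directions. This is what makes $\mathrm{Hom}(\tau(-),Y)$ a sheaf, and what lets you localize in your Steps 2 and 3. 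Spatiality of $\Lambda_R$ is true, but it needs an argument beyond the statement of \autoref{thmA}, and it is more than you need.
 \item In Step 2, glue directly on the cover of $X_\cU$ by the opens $\tau(U_{i,h})$ rather than refining $\cU$ to a new presentation. Otherwise you reintroduce exactly the refinement lemma you are avoiding. Independence of the presentation then follows from uniqueness of glued morphisms applied to $\id_W$.
 \item In the full faithfulness step, the limit formula leaves you comparing $\mathrm{Hom}(U_i,W')$ with $\mathrm{Hom}(\tau U_i,X_{\cU'})$. Their targets are not affine, so full faithfulness of $\tau$ on $\Aff$ alone does not finish the argument. You need $W'(T)\cong\mathrm{Hom}(\tau T,X_{\cU'})$ for affine $T$. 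This follows by localizing on $T$ on both sides and using the description of overlaps from Step 1.
\end{enumerate}
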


In the rest of the paper, we explain a complete proof for \autoref{thmA}. We explain the proof of \autoref{thmB}, but we omit the more technical aspects of the proof. A complete proof in larger generality is contained in \cite{toolkit}.

%%%%%%%%%%%%%%%%%%%%%%%%%%%%%%%%%%%%%%%%%%%%%%%%%%%%%%%%%%%%%%%%%%%%%%%%%%%%%%%%%%%%%%%%%%%%%%%%%%%%%%%%%%%%%%%%%%%%%%%%%%%%%%%%%%%%%%%%%%%%%%%%%%%%%%%%%%%%%%%%%%%%%
%%%%%%%%%%%%%%%%%%%%%%%%%%%%%%%%%%%%%%%%%%%%%%%%%%%%%%%%%%%%%%%%%%%%%%%%%%%%%%%%%%%%%%%%%%%%%%%%%%%%%%%%%%%%%%%%%%%%%%%%%%%%%%%%%%%%%%%%%%%%%%%%%%%%%%%%%%%%%%%%%%%%%

\section{Proof of \autoref{thmA}}
\label{Proof}

We prove \autoref{thmA} by embedding both spaces $\PSpec R$ and $\Lambda_R^\top$ as induced subspaces into a common larger space and compare their respective images. This common larger space is the monoid spectrum $\MSpec R$ of $R$, as introduced below.

%%%%%%%%%%%%%%%%%%%%%%%%%%%%%%%%%%%%%%%%%%%%%%%%%%%%%%%%%%%%%%%%%%%%%%%%%%%%%%%%%%%%%%%%%%%%%%%%%%%%%%%%%%%%%%%%%%%%%%%%%%%%%%%%%%%%%%%%%%%%%%%%%%%%%%%%%%%%%%%%%%%%%
\subsection{The monoid spectrum}
\label{subsection: the monoid spectrum}

An \emph{prime $m$-ideal} of $R$ is a subset $P$ of $R$ that contains $0$, that is closed under multiplication with elements of $R$ and for which $S=R-P$ is a multiplicative subset. The \emph{monoid spectrum of $R$} is the set $\MSpec R$ of all prime $m$-ideals together with the topology generated by the open subsets
\[
 U^m_h \ = \ \{P\in\MSpec R\mid h\notin P\}.
\]

Evidently every prime ideal of $R$ is a prime $m$-ideal, which defines an inclusion $\PSpec R\hookrightarrow\MSpec R$. Further it is clear that $U^P_h=U^m_h\cap\PSpec R$, which shows that this inclusion is a topological embedding.

Similarly there is a canonical embedding of $\Lambda_R^\top$ into $\MSpec R$ in terms of pullbacks along the \emph{characteristic map $\chi_R:R\to\Lambda_R$}, which maps an element $h\in R$ to the isomorphism class $u_h$ of the principal open $U_h\to X$.

\begin{lemma}\label{lemma: embedding of the Stone dual into MSpec R}
 The inverse image $\chi_R^{-1}(\cP)$ of a principal prime ideal $\cP$ of $\Lambda_R$ is a prime $m$-ideal of $R$. The corresponding map
 \[
  \chi_R^\ast: \ \Lambda_R^\top \ \longrightarrow \ \MSpec R.
 \]
 is a topological embedding.
\end{lemma}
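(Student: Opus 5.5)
The plan has three parts: first record the basic properties of $\chi_R$, then check that $\chi_R^{-1}(\cP)$ is a prime $m$-ideal, and finally prove that $\chi_R^\ast$ is injective, continuous and open onto its image.

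\emph{Properties of $\chi_R$.} I will establish:
\begin{enumerate}[label=(\roman*)]
\item $\chi_R(1)=u_1$ is the top element $[X\to X]$, since $R[1^{-1}]=R$;
\item $\chi_R(0)=u_0$ is the bottom element, since $R[0^{-1}]=0$ and $U_0$ is the initial object, which is the atlas of the empty family;
\item $\chi_R(gh)=u_g\wedge u_h$.
\end{enumerate}
For (iii), note that $U_g\times_X U_h=\Spec R[g^{-1}][h^{-1}]=U_{gh}$, and the fibre product of two open immersions into $X$ represents their meet in $\Lambda_R$. To see the latter, a morphism $W\to U_g$ over $X$ exists iff $W\leq u_g$, and fibre products are computed in $\Sh_\fs$, where the colimits over atlases are preserved by pullback because $\Sh_\fs$ is a topos. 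As a consequence, $\chi_R$ is order preserving with respect to divisibility: $u_{ah}\leq u_h$.

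\emph{$\chi_R^{-1}(\cP)$ is a prime $m$-ideal.} Let $\cP$ be a principal prime ideal and put $P=\chi_R^{-1}(\cP)$.
\begin{itemize}
\item $0\in P$, since $u_0$ is the bottom element and $\cP$ is a down-set.
\item $P$ is closed under multiplication by $R$: if $h\in P$, then $u_{ah}\leq u_h\leq\bigvee\cP$, so $ah\in P$.
\item $R-P$ is multiplicative. We have $1\notin P$ because $\cP$ is proper and a down-set, hence cannot contain the top element. If $g,h\notin P$, then $u_g,u_h\in\cS=\Lambda_R-\cP$, and $\cS$ is closed under finite meets, so $u_{gh}=u_g\wedge u_h\in\cS$, i.e. $gh\notin P$.
\end{itemize}

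\emph{Continuity.} For every $h$ we have $(\chi_R^\ast)^{-1}(U^m_h)=\{\cP\mid h\notin\chi_R^{-1}\cP\}=\{\cP\mid u_h\notin\cP\}=U^\Lambda_h$. So $\chi_R^\ast$ is continuous. Moreover, every generating open of $\Lambda_R^\top$ is the preimage of a generating open of $\MSpec R$. Hence, once injectivity is known, $\chi_R^\ast$ is a homeomorphism onto its image with the subspace topology.

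\emph{Injectivity (main obstacle).} I must show that $\cP$ is determined by $\{h\mid u_h\in\cP\}$. Since $\cP=\{w\mid w\leq\bigvee\cP\}$, it suffices to show two things.
\begin{enumerate}[label=(\alph*)]
\item Every element of $\Lambda_R$ is a join of principal opens $u_h$.
\item $\cP$ is determined by which $u_h$ it contains.
\end{enumerate}

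For (a), take $W=\colim\cU_\cS$ with $\cS=\{U_{h_i}\}$. I claim $[W\to X]=\bigvee_i u_{h_i}$. Indeed, each $U_{h_i}$ maps to $W$ over $X$. Conversely, if each $U_{h_i}$ maps to some $W'$ over $X$, then these maps automatically agree on the $U_{i,j}$, because $W'\to X$ is a monomorphism (open immersions are monic, as colimits of monodromy-free atlases of subobjects of $X$ in the topos). By the universal property of the colimit they then induce $W\to W'$.

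For (b), a principal prime $\cP$ satisfies $w\in\cP$ iff $w\leq\bigvee\cP$. By (a), $\bigvee\cP$ is the join of the $u_h$ lying below it, and these are exactly the $u_h$ in $\cP$. So if $\chi_R^{-1}\cP=\chi_R^{-1}\cP'$, then $\bigvee\cP=\bigvee\{u_h\mid h\in\chi_R^{-1}\cP\}=\bigvee\cP'$, hence $\cP=\cP'$.

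The technical heart is therefore the identification of joins and meets in $\Lambda_R$ with colimits and fibre products in $\Sh_\fs$, in particular that open immersions into $X$ are monomorphisms. I would verify this using that colimits in a topos are universal and that monodromy-freeness makes each atlas a union of subobjects of $X$.
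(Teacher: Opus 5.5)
Your proof is correct and follows the paper's route. You use $\chi_R(0)=\0$, $\chi_R(1)=\1$ and $\chi_R(gh)=u_g\wedge u_h$ to get the prime $m$-ideal property, injectivity from $\bigvee\cP=\bigvee\{u_h\mid h\in\chi_R^{-1}(\cP)\}$, and the embedding from $(\chi_R^\ast)^{-1}(U^m_h)=U^\Lambda_h$. The only difference is that you spell out why the colimit of an atlas is the join of its principal opens, namely that the open immersion $W'\to X$ is a monomorphism in $\Sh_\fs$; the paper takes this as immediate from the definition of $\Lambda_R$.
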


\begin{proof}
 Let $\cP$ be a principal prime ideal of $\Lambda_R$ with its inverse image $P=\chi_R^{-1}(\cP)$ in $R$ and let $S=R-P$. Then $0\in P$ since $R[0^{-1}]=\{0\}$ is the trivial semiring and thus $\chi_R(0)=[\emptyset\to X]=\0$. Further $1\in S$ since $R[1^{-1}]=R$ and thus $\chi_R(1)=[X\to X]=\1$. Since $U_{gh}=U_g\times_XU_h$, we have $\chi_R(g\cdot h)=[U_g\to X]\wedge [U_h\to X]$. Thus $gh\in P$ for $g\in P$ and $h\in R$ and $gh\in S$ for $g,h\in S$, which shows that $P$ is a prime ideal.

 The map $\chi_R^\ast$ is injective since a principal prime ideal $\cP=\{w\in\Lambda_R\mid w\leq\bigvee\cP\}$ is generated by $\bigvee\cP$ and since
 \[\textstyle
  \bigvee\cP \ = \ \bigvee\{u_h\in\Lambda_R\mid h\in R, \ u_h\in\cP\} \ = \ \bigvee\{u_h\in\Lambda_R\mid h\in P\}.
 \]
 by the very definition of open subobjects in terms of colimits of principal opens. Since the topology of $\Lambda^\top_R$ is generated by the open subsets of the form $U^\Lambda_h$ with $h\in R$ and since $U^\Lambda_h=U^m_h\cap\Lambda^\top_R$, this inclusion is a topological embedding.
\end{proof}

%%%%%%%%%%%%%%%%%%%%%%%%%%%%%%%%%%%%%%%%%%%%%%%%%%%%%%%%%%%%%%%%%%%%%%%%%%%%%%%%%%%%%%%%%%%%%%%%%%%%%%%%%%%%%%%%%%%%%%%%%%%%%%%%%%%%%%%%%%%%%%%%%%%%%%%%%%%%%%%%%%%%%
\subsection{Proof strategy}
\label{subsection: proof strategy}

\autoref{lemma: embedding of the Stone dual into MSpec R} allows us to consider both $\PSpec R$ and $\Lambda^\top_R$ as topological subspaces of $\MSpec R$, endowed with the subspace topology. Thus all that is needed to show is that $\PSpec R$ and $\Lambda_R^\top$ coincide as subsets of $\MSpec R$. It is clear from their analogous definitions that the respective principal open subsets $U^P_h$ and $U^\Lambda_h$ agree.

The homeomorphism is functorial in $R$ since $\MSpec R$ is functorial in $R$: a semiring homomorphism $f:R\to S$ defines a continuous map $f^\ast:\MSpec S\to\MSpec R$ by taking the inverse image of prime $m$-ideals.

%%%%%%%%%%%%%%%%%%%%%%%%%%%%%%%%%%%%%%%%%%%%%%%%%%%%%%%%%%%%%%%%%%%%%%%%%%%%%%%%%%%%%%%%%%%%%%%%%%%%%%%%%%%%%%%%%%%%%%%%%%%%%%%%%%%%%%%%%%%%%%%%%%%%%%%%%%%%%%%%%%%%%
\subsection{Technique \#1: localizations}
\label{subsection: localizations}

Let $P$ be a prime $m$-ideal of $R$ and assume we intend to show that $P$ is in $\Lambda_R^\top$ or $\PSpec R$. Then we can assume that $P$ is the unique maximal $m$-ideal of $R$ due to the following fact.

\begin{lemma}\label{lemma: localizations at primes}
 The extension $S^{-1}P=\{\frac as\mid a\in P,s\in S\}$ is the unique maximal $m$-ideal of $S^{-1}R$ and $(S^{-1}R)^\times=S^{-1}R-S^{-1}P$. Moreover, $P=\iota_S^{-1}(S^{-1}P)$, where $\iota_S:R\to S^{-1}R$ is the localization map, and $P$ is in $\PSpec R$ (resp.\ $\Lambda^\top_R$) if $S^{-1}P$ is in $\PSpec S^{-1}R$ (resp.\ $\Lambda^\top_{S^{-1}R}$).
\end{lemma}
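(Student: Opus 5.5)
The first three claims are elementary computations in the localization, and I will do them first. For $\frac as$ with $a\in P$, $s\in S$, multiplying by $\frac bt$ gives $\frac{ab}{st}$ with $ab\in P$, so $S^{-1}P$ is closed under multiplication by $S^{-1}R$. It contains $0=\frac 01$. Every element $\frac bt$ with $b\in S$ is a unit with inverse $\frac tb$, so $S^{-1}R-S^{-1}P$ consists of units. Conversely, a unit $\frac as$ with $a\in P$ is impossible. Indeed, $\frac as\cdot\frac bt=\frac11$ means $uab=ust$ for some $u\in S$. The left side lies in $P$, the right side in $S$, a contradiction.

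The same argument shows $P=\iota_S^{-1}(S^{-1}P)$. If $\frac h1=\frac as$ with $a\in P$, then $uhs=ua$ for some $u\in S$, which lies in $P$. Since $S$ is multiplicative and $us\in S$, this forces $h\in P$. Hence $S^{-1}P$ is exactly the set of non-units, so it is a prime $m$-ideal whose complement, the unit group, is multiplicative. Every proper $m$-ideal avoids units, so $S^{-1}P$ is the unique maximal $m$-ideal.

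For the transfer statements I will use functoriality of $\MSpec$ from \autoref{subsection: proof strategy}. The localization $\iota_S$ gives $\iota_S^\ast:\MSpec S^{-1}R\to\MSpec R$ with $\iota_S^\ast(S^{-1}P)=P$. For $\PSpec$ this is immediate: if $S^{-1}P$ is an ideal, i.e.\ closed under addition, then its inverse image under the semiring homomorphism $\iota_S$ is closed under addition, so $P$ is a prime ideal.

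For $\Lambda^\top$ I need that $\iota_S^\ast$ carries $\Lambda^\top_{S^{-1}R}$ into $\Lambda^\top_R$ compatibly with $\chi$. Concretely, pullback along $Y=\Spec S^{-1}R\to X$ defines a map $\lambda:\Lambda_R\to\Lambda_{S^{-1}R}$ with $\lambda\circ\chi_R=\chi_{S^{-1}R}\circ\iota_S$, because $U_h\times_XY=U_{h/1}$. Given a principal prime $\cQ$ of $\Lambda_{S^{-1}R}$, I will set $\cP=\lambda^{-1}(\cQ)$. Then $\chi_R^{-1}(\cP)=\iota_S^{-1}\chi_{S^{-1}R}^{-1}(\cQ)=\iota_S^{-1}(S^{-1}P)=P$.

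The main obstacle is showing that $\cP$ is a principal prime ideal of $\Lambda_R$. Two properties are needed. First, $\lambda$ must preserve finite meets and the top element; this follows since fibre products commute with pullback. Second, $\lambda$ must preserve arbitrary joins. Pullback in the topos $\Sh_\fs$ commutes with colimits, so the pullback of $\colim\cU_\cS$ is the colimit of the pulled-back atlas; I would check that this is again the atlas of the family $\{U_{h_i/1}\}$. Given these, $\lambda$ has a right adjoint $\rho$. Then $\cP=\{w\mid \lambda(w)\leq\bigvee\cQ\}=\{w\mid w\leq\rho(\bigvee\cQ)\}$ is principal, it is proper because $\1\notin\cP$, and its complement is closed under finite meets since $\lambda$ preserves them. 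I expect verifying the join preservation rigorously in terms of the paper's definition of open immersions to be the delicate point.
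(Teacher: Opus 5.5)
Your proposal is correct and follows the paper's route. The paper cites the localization facts from the theory of pointed monoids and from \cite{GKMX}, whereas you prove them directly. For the $\Lambda^\top$ part, both arguments pull back $\cQ$ along the induced map $\iota_{S,\ast}:\Lambda_R\to\Lambda_{S^{-1}R}$ and use $\chi_{S^{-1}R}\circ\iota_S=\iota_{S,\ast}\circ\chi_R$. Your argument that $\iota_{S,\ast}^{-1}(\cQ)$ is a principal prime ideal supplies a justification the paper only asserts: pullback preserves finite meets and arbitrary joins because colimits in $\Sh_\fs$ are stable under pullback, so $\iota_{S,\ast}$ has a right adjoint.
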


\begin{proof}
 The assertions about $S^{-1}P$ (it's the unique maximal $m$-ideal and the complement of the unit group, and that its inverse image in $R$ is $P$) are standard results about the localization of (pointed) monoids; for instance see \cite[section 2.1.2]{Chu-Lorscheid-Santhanam12}. See \cite[Lemma 1.11]{GKMX} for the fact that the inverse image of a prime ideal is a prime ideal.

 Assume that $S^{-1}P=\chi_R^{-1}(\cQ)$ for a principal prime ideal $\cQ$ of $\Lambda_R$. Let $\iota_{S,\ast}:\Lambda_R\to\Lambda_{S^{-1}R}$ be the order-preserving map induced by $\iota_S:R\to S^{-1}R$ and $\cP=\iota^{-1}_{S,\ast}(\cQ)$, which is a principal prime ideal of $\Lambda_R$. Since $\chi_{S^{-1}R}\circ\iota_S=\iota_{S,\ast}\circ\chi_R$, we have $P=\chi_R^{-1}(\cP)$, which shows that $P$ is in $\Lambda^\top_R$.
\end{proof}

This allows us to assume that $P$ is the unique maximal $m$-ideal of $R$ and that $R^\times=R-P$.

%%%%%%%%%%%%%%%%%%%%%%%%%%%%%%%%%%%%%%%%%%%%%%%%%%%%%%%%%%%%%%%%%%%%%%%%%%%%%%%%%%%%%%%%%%%%%%%%%%%%%%%%%%%%%%%%%%%%%%%%%%%%%%%%%%%%%%%%%%%%%%%%%%%%%%%%%%%%%%%%%%%%%
\subsection{Technique \#2: covering conditions}
\label{subsection: covering conditions}

The comparison $\PSpec R$ and $\Lambda_R^\top$ relies on explicit characterizations of the respective covering conditions. For $\PSpec R$, such a condition is described in \cite[Lemma 3.7]{GKMX}:

\begin{lemma}\label{lemma: covering condition for PSpec R}
 Let $\{h_i\mid i\in I\}$ be a family of elements of $R$. Then $\PSpec R=\bigcup_{i\in I}U^P_{h_i}$ if and only if $\gen{h_i\mid i\in I}=R$.
\end{lemma}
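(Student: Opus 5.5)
We prove the two implications separately. Here $\gen{h_i\mid i\in I}$ denotes the ideal of $R$ generated by the $h_i$, i.e.\ the set of all finite sums $\sum a_jh_{i_j}$ with $a_j\in R$. This set is an $R$-submodule of $R$.

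\textbf{The implication ``$\Leftarrow$''.} Assume $\gen{h_i\mid i\in I}=R$ and let $\fp\in\PSpec R$. Suppose $\fp\notin U^P_{h_i}$ for all $i$, that is, $h_i\in\fp$ for all $i$. Since $\fp$ is an $R$-submodule of $R$, it is closed under multiplication by elements of $R$ and under addition. Hence $\gen{h_i\mid i\in I}\subseteq\fp$, and so $1\in\fp$. This contradicts $1\in S=R-\fp$. Therefore $\fp\in U^P_{h_i}$ for some $i$, and the $U^P_{h_i}$ cover $\PSpec R$.

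\textbf{The implication ``$\Rightarrow$''.} We argue by contraposition. Suppose $I_0=\gen{h_i\mid i\in I}$ is a proper ideal, so $1\notin I_0$. We must produce a prime ideal $\fp\supseteq I_0$; such a $\fp$ misses every $U^P_{h_i}$. The plan is the semiring analogue of Krull's argument, using Zorn's lemma.

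\begin{enumerate}[label=(\arabic*)]
 \item Consider the set $\Sigma$ of ideals $J\supseteq I_0$ with $1\notin J$. It is nonempty, since $I_0\in\Sigma$. A union of a chain in $\Sigma$ is again an ideal that omits $1$. So Zorn's lemma gives a maximal element $\fm\in\Sigma$.
 \item Show that $\fm$ is prime. Clearly $1\in R-\fm$, so it remains to check that $R-\fm$ is closed under multiplication. Suppose $a,b\notin\fm$ but $ab\in\fm$.
 \item Since $a\notin\fm$, the ideal $\fm+Ra$ strictly contains $\fm$. By maximality $1\in\fm+Ra$, so $1=m+ra$ with $m\in\fm$ and $r\in R$. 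In the same way $1=m'+r'b$ with $m'\in\fm$ and $r'\in R$.
 \item Multiply the two expressions:
 \[
  1 \ = \ (m+ra)(m'+r'b) \ = \ mm'+mr'b+ram'+rr'ab.
 \]
 Each summand lies in $\fm$; the last one because $ab\in\fm$. Hence $1\in\fm$, a contradiction.
\end{enumerate}

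This multiplication of two expressions for $1$ is the only place where one might worry about the lack of subtraction. It works because the argument only adds and multiplies, and never cancels.

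Thus $\fm$ is a prime ideal containing every $h_i$. Hence $\fm\notin\bigcup_{i\in I}U^P_{h_i}$, and the $U^P_{h_i}$ do not cover $\PSpec R$. This proves ``$\Rightarrow$''.

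The main obstacle is the primality step (2)–(4): one has to check that the maximality argument survives in the absence of additive inverses. The computation in (4) shows that it does, since no subtraction is ever needed.
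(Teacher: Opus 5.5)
Your proof is correct and complete. The paper gives no argument of its own for this lemma; it simply imports it from \cite[Lemma 3.7]{GKMX}. Your argument is the standard Krull-type proof of such a statement. The direction ``$\Leftarrow$'' uses only that prime ideals are $R$-submodules. For ``$\Rightarrow$'', Zorn's lemma gives an ideal $\mathfrak m\supseteq\gen{h_i\mid i\in I}$ that is maximal among ideals omitting $1$. Its primality follows from expanding $(m+ra)(m'+r'b)=1$, which, as you observe, needs no cancellation. Compared with the paper's citation, your version makes the note self-contained and shows that the only non-formal input is Zorn's lemma (i.e.\ the axiom of choice).
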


For $\Lambda_R^\top$, we have the following covering condition. An $R$-algebra is a semiring homomorphism $\alpha:R\to T$. For $h\in R$, we define $T[h^{-1}]$ as $T[\alpha(h)^{-1}]$.

\begin{lemma}\label{lemma: covering condition for the Stone dual}
 Let $\{h_i\mid i\in I\}$ be a family of elements of $R$. Then $\1=\bigvee\{u_{h_i}\mid i\in I\}$ if and only if for every $R$-algebra $T$, the canonical morphism
 \begin{equation*}
  T \ \longrightarrow \ \lim\bigg(\prod_{i\in I} T[h_i^{-1}] \begin{array}{c}\longrightarrow\\[-7pt]\longrightarrow\end{array} \prod_{i,j\in I} T[(h_ih_j)^{-1}] \bigg)
 \end{equation*}
 is an isomorphism.
\end{lemma}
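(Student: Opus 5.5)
Both directions reduce to unwinding the definitions of $\1$, of $\bigvee$ in $\Lambda_R$, and of sheaves on the Zariski site. The underlying statement is that $\{U_{h_i}\to X\}$ is a covering family in $\cT$ if and only if the colimit of its atlas is all of $X$.

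First I will make the join concrete. By definition of open subschemes, $\bigvee\{u_{h_i}\mid i\in I\}$ is the class of the open immersion $W=\colim_{\Sh_\fs}\cU_\cS\to X$, where $\cS=\{U_{h_i}\to X\}$ and $\cU_\cS$ is its atlas. Its fibre products are $U_{h_i}\times_XU_{h_j}=U_{h_ih_j}$, as used in the proof of \autoref{lemma: embedding of the Stone dual into MSpec R}. So the condition $\1=\bigvee u_{h_i}$ says exactly that $W\to X$ is an isomorphism in $\Sh_\fs$.

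Next, the equalizer condition in the statement says that $\cS$ is a family of effective epimorphisms in $\Aff$ that is universal, i.e.\ stable under base change. To see this, note that an $R$-algebra $T$ is the same as a morphism $\Spec T\to X$. The base change of $U_{h_i}$ along it is $\Spec T[h_i^{-1}]$, and the base change of $U_{h_ih_j}$ is $\Spec T[(h_ih_j)^{-1}]$. Bijectivity of $\Hom(X',Y)\to$ the equalizer for all $Y$ is then, via $\Gamma$, the statement that $T$ is the limit of the displayed diagram. I will take $Y=\Spec\N[x]$ and consider all $T$, so that underlying sets are detected and we get a limit in $\SRings$. Thus the displayed condition for all $T$ is equivalent to $\cS$ being a covering in the canonical topology, which by definition means $\cS\in\cT$.

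Both directions then follow from standard site theory.

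\emph{($\Leftarrow$)} Suppose $\cS\in\cT$. Then $\coprod U_{h_i}\to X$ is an epimorphism of sheaves. Its kernel pair is $\coprod U_{h_ih_j}$, since the Yoneda embedding preserves fibre products. In a topos, epimorphisms are effective, so $X$ is the coequalizer of the kernel pair, which is $W$. Hence $W\to X$ is an isomorphism and $\1=\bigvee u_{h_i}$.

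\emph{($\Rightarrow$)} Suppose $W\to X$ is an isomorphism. For any $R$-algebra $T$, the sheaf $\Spec T$ pulls back along $X\to X$, and universality of colimits in the topos $\Sh_\fs$ gives
\[
\Spec T\simeq\colim\big(\Spec T[h_i^{-1}],\,\Spec T[(h_ih_j)^{-1}]\big).
\]
Now map into the representable sheaf $\Spec\N[x]$. Since representables are sheaves, $\Hom_{\Sh_\fs}$ equals $\Hom_\Aff$ on representables, and this yields that the displayed equalizer equals $T$ as sets. Compatibility with the semiring structure then gives the isomorphism of semirings.

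The main obstacle is this last step, identifying maps out of the colimit $W$ with the equalizer. It needs the facts that the canonical topology is subcanonical, so representables are sheaves, and that colimits in $\Sh_\fs$ are universal. Both are standard topos facts. The remaining care is that the limit is taken in $\SRings$, where limits are computed on underlying sets.
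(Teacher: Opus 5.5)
Your proposal is correct and follows essentially the same route as the paper. Both arguments identify $\bigvee u_{h_i}$ with the class of $W=\colim\bigl(\coprod U_{h_ih_j}\rightrightarrows\coprod U_{h_i}\bigr)$, read $W\simeq X$ as $\{U_{h_i}\to X\}$ being a covering in the canonical topology (hence in $\cT$), and translate universal effective epimorphicity into the equalizer condition on coordinate semirings. The difference is that you spell out the topos-theoretic steps the paper compresses into ``this means'': effectivity of epimorphisms, universality of colimits, and testing against the representable $\Spec\N[x]$.
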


\begin{proof}
 By the definition of $\Lambda_R$, the smallest upper bound $\bigvee\{u_{h_i}\mid i\in I\}$ of the $u_{h_i}$ is the isomorphism class $w=[W\to X]$ of the colimit
 \[
  W \ = \ \colim\bigg( \coprod_{i,j\in I} \ U_{h_ih_j} \begin{array}{c}\longrightarrow\\[-7pt]\longrightarrow\end{array} \coprod_{i\in I} \ U_{h_i} \bigg)
 \]
 (where the colimit is taken in $\Sh_\fs$). Thus $\bigvee\{u_{h_i}\mid i\in I\}=\1$ if and only if $W=X$. This means that $\{U_{h_i}\to X\}$ is in the canonical topology for $\Aff$ and therefore in $\cT$ (as a family of principal opens). Since covering families of the canonical topology are universally effective-epimorphic, this means (when translated to the respective ``coordinate algebras'' in $\SRings$) that for every semiring homomorphism $R\to T$, the induced homomorphism
 \[
  T \ \longrightarrow \ \lim\bigg(\prod_{i\in I} T[h_i^{-1}] \begin{array}{c}\longrightarrow\\[-7pt]\longrightarrow\end{array} \prod_{i,j\in I} T[(h_ih_j)^{-1}] \bigg)
 \]
 (where the limit is taken in $\SRings$) is an isomorphism.
\end{proof}

%%%%%%%%%%%%%%%%%%%%%%%%%%%%%%%%%%%%%%%%%%%%%%%%%%%%%%%%%%%%%%%%%%%%%%%%%%%%%%%%%%%%%%%%%%%%%%%%%%%%%%%%%%%%%%%%%%%%%%%%%%%%%%%%%%%%%%%%%%%%%%%%%%%%%%%%%%%%%%%%%%%%%
\subsection{Technique \#3: symmetric algebras}
\label{subsection: symmetric algebras}

An $R$-algebra is a semiring homomorphism $R\to T$ and an $R$-module is an additive monoid $M$ together with an action of $R$. The forgetful functor from $R$-algebras to $R$-modules (by forgetting the multiplicative structure of an $R$-algebra) has a left adjoint, which sends an $R$-module $M$ to the symmetric algebra $\Sym_R(M)$ over $R$ (which is constructed as a tensor algebra, in the same way as in the case of rings).

\begin{lemma}\label{lemma: symmetric algebra of an ideal}
 Let $I$ be an ideal of $R$. Then the canonical morphism
 \[
  \Psi_{R}: \ \Sym_R(I) \ \longrightarrow \ \lim\bigg(\prod_{g\in I} \Sym_R(I)[(g)^{-1}] \begin{array}{c}\longrightarrow\\[-7pt]\longrightarrow\end{array} \prod_{g,g'\in I} \Sym_R(I)[(gg')^{-1}] \bigg).
 \]
 is an isomorphism if and only if $I=R$.
\end{lemma}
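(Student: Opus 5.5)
Since the localizations in the statement are taken at elements $g\in I\subseteq R$, I read $\Sym_R(I)[g^{-1}]$ as the localization at the image of $g$ in degree $0$, i.e.\ $\Sym_R(I)\otimes_R R[g^{-1}]$. This matches the notation $T[h^{-1}]=T[\alpha(h)^{-1}]$ used before \autoref{lemma: covering condition for the Stone dual}. For $g\in I$ I write $[g]$ for the corresponding element of $\Sym^1_R(I)=I$. The plan is to prove the two implications separately. The ``if'' direction is formal. The ``only if'' direction goes by writing down an explicit compatible family in the limit and exploiting the grading of $\Sym_R(I)$.

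\emph{If $I=R$.} Then $1\in I$, so the index set contains $g=1$, for which $T[1^{-1}]=T$ with $T=\Sym_R(I)$. The projection of the limit onto the factor $g=1$ is a left inverse of $\Psi_R$, so $\Psi_R$ is injective. For surjectivity, take a compatible family $(a_g)_{g\in I}$. Since $T[(1\cdot g)^{-1}]=T[g^{-1}]$, the compatibility condition for the pair $(1,g)$ says exactly that $a_g$ is the image of $a_1$ in $T[g^{-1}]$. Hence $(a_g)=\Psi_R(a_1)$.

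\emph{If $\Psi_R$ is an isomorphism.} I will first record that $\Sym_R(I)=\bigoplus_{n\geq0}\Sym^n_R(I)$ is $\N$-graded with $\Sym^0=R$ and $\Sym^1=I$. Localization at degree-$0$ elements preserves this grading. Moreover, an equality $a/s=b/t$ means $uta=usb$ for some degree-$0$ element $u$, so it can be checked componentwise. Limits in $\SRings$ are computed on underlying sets. Next I consider the family
\[
 s_g \ = \ \frac{[g]}{g} \ \in \ \Sym_R(I)[g^{-1}] \qquad (g\in I).
\]
It is compatible: in $\Sym_R(I)[(gg')^{-1}]$, $R$-linearity gives $g'[g]=[gg']=g[g']$, hence $\frac{[g]}{g}=\frac{g'[g]}{gg'}=\frac{g[g']}{gg'}=\frac{[g']}{g'}$. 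By surjectivity, $(s_g)=\Psi_R(f)$ for some $f\in\Sym_R(I)$. Comparing degree-$1$ components, which is legitimate by the grading remark, we may replace $f$ by $[f_1]$ for some $f_1\in I$. Thus $[f_1]/1=[g]/g$ in $I[g^{-1}]$ for every $g\in I$. Unwinding the localization, there is some $k\geq 0$ with $g^{k+1}f_1=g^{k+1}$ in $I\subseteq R$. Consequently $f_1/1=1$ in $R[g^{-1}]$ for every $g\in I$.

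Now I pass to degree $0$. The scalars $f_1$ and $1$ in $\Sym^0=R$ have the same image under $\Psi_R$, namely the family $(1)_{g\in I}$. Injectivity of $\Psi_R$, restricted to the direct summand $R=\Sym^0$, gives $f_1=1$. Hence $1\in I$ and $I=R$. This also covers degenerate cases such as $I=\{0\}$ uniformly.

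The step I expect to need the most care is the grading bookkeeping. One has to check that localization of $\Sym_R(I)$ at degree-$0$ elements commutes with taking homogeneous components in the semiring setting, where cancellation is unavailable. One also has to check that a homogeneous compatible family can only come from the corresponding homogeneous component of a preimage. Both points reduce to the explicit description of the equivalence relation in the localization, with $u$ of degree $0$, but this should be written out carefully.
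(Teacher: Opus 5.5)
Your proof is correct, and it takes a different route from the paper's in both directions.

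\emph{The case $I=R$.} The paper deduces this from \autoref{lemma: covering condition for the Stone dual}, using $\1=\bigvee\{u_h\mid h\in R\}$. You instead observe that $g=1$ lies in the index set, so the limit collapses onto the factor $T[1^{-1}]=T$. This is more elementary and avoids the locale/canonical-topology machinery entirely.

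\emph{The converse.} The paper uses the same witness family $([g]/g)_{g\in I}$ as you do, but it simply asserts that this family is not in the image of $\Psi_R$ when $1\notin I$. That non-surjectivity claim actually fails in degenerate cases.
\begin{enumerate}
 \item For $I=\{0\}$ and $R\neq 0$, the limit is the zero semiring, so $\Psi_R$ is surjective.
 \item For $R=\mathbb{Z}\times\mathbb{Z}$ and $I=eR$ with $e=(1,0)$, one has $ge=g$ for all $g\in I$. Hence $\Psi_R([e])=([g]/g)_{g\in I}$.
\end{enumerate}
In both examples what fails is injectivity: $\Psi_R(e)=\Psi_R(1)$ with $e\neq 1$, and in the first example $\Psi_R(0)=\Psi_R(1)$.

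Your argument detects exactly this. You pull the family back to a degree-$1$ element $[f_1]$ and extract $g^{k+1}f_1=g^{k+1}$. Then injectivity on the summand $\Sym^0=R$ forces $f_1=1$.

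\emph{What each approach costs and buys.} Your version costs some grading bookkeeping. You need that the symmetric-algebra congruence is homogeneous and trivial in degrees $0$ and $1$, and that multiplication by $g^{k}$ preserves degree. Both are routine. In return, it proves the lemma uniformly, including the case $\fp=\{0\}$ (for instance after localizing at a prime to reach a semifield). That case genuinely arises when the lemma is applied to $\Sym_R(\fp)$ in the proof of \autoref{thmA}, and the paper's one-line argument does not cover it.
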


\begin{proof}
 If $I\neq R$, then $1\notin I$ and the class $1=(\frac gg)_{g\in I}$ in the limit is not contained in the image of $\Psi_R$. If $I=R$, then $\1=\bigvee\{u_h\mid h\in R\}$ in $\Lambda_R$ and thus $\Psi_R$ is an isomorphism by \autoref{lemma: covering condition for the Stone dual}.
\end{proof}

%%%%%%%%%%%%%%%%%%%%%%%%%%%%%%%%%%%%%%%%%%%%%%%%%%%%%%%%%%%%%%%%%%%%%%%%%%%%%%%%%%%%%%%%%%%%%%%%%%%%%%%%%%%%%%%%%%%%%%%%%%%%%%%%%%%%%%%%%%%%%%%%%%%%%%%%%%%%%%%%%%%%%
\subsection{The inclusion of the prime spectrum into the Stone dual}

We begin with the verification of the inclusion $\PSpec R\subset\Lambda_R^\top$ and consider a point $\fp$ of $\PSpec R$, which is a prime ideal of $R$. We define
\[\textstyle
 \cP \ = \ \gen{\bigvee\chi_R(\fp)} \ = \ \{w\in\Lambda\mid w\leq\bigvee\chi_R(\fp)\}
\]
as the principal ideal of $\Lambda_R$ that is generated by the join $\bigvee\chi_R(\fp)$ of the image of $\fp$ in $\Lambda_R$. The following claim implies that $\fp\in\Lambda_R^\top$, as desired.

\begin{claim}
 The principal ideal $\cP$ is a prime ideal of $\Lambda_R$ and $\fp=\chi_R^{-1}(\cP)$.
\end{claim}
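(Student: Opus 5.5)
By \autoref{lemma: localizations at primes}, applied with $S=R-\fp$ and the prime ideal $S^{-1}\fp$ of $S^{-1}R$, it suffices to treat the local case. So we assume that $\fp$ is the unique maximal $m$-ideal of $R$ and that $R^\times=R-\fp$.

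The claim then consists of two assertions. First, $\cP$ is proper and its complement is closed under finite meets, so $\cP$ is prime. Second, $\chi_R^{-1}(\cP)=\fp$. The inclusion $\fp\subset\chi_R^{-1}(\cP)$ holds by construction. For the reverse inclusion, let $h\in R$ with $u_h\leq\bigvee\chi_R(\fp)$. If $h\notin\fp$, then $h$ is a unit and $u_h=\1$. Hence both the reverse inclusion and properness reduce to one key statement:
\[\textstyle
 \bigvee\chi_R(\fp)\neq\1 .
\]

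To prove the key statement, suppose to the contrary that $\bigvee\{u_g\mid g\in\fp\}=\1$. By \autoref{lemma: covering condition for the Stone dual}, applied with the $R$-algebra $T=\Sym_R(\fp)$ (the module $\fp$ is an ideal, since a prime ideal is an $R$-submodule), the map from $T$ to the limit over $\prod_{g\in\fp}T[g^{-1}]$ would be an isomorphism. The localizations in \autoref{lemma: covering condition for the Stone dual} are taken at the images $\alpha(g)\in T$ of $g\in R$. In \autoref{lemma: symmetric algebra of an ideal} they are taken at the degree-one elements $g\in\fp\subset\Sym_R(\fp)$. I must check that these agree, or adapt the argument of \autoref{lemma: symmetric algebra of an ideal} directly.

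The argument to adapt is the following. The family $(\frac gg)_{g}$, formed with the degree-one copies of $g$, defines a compatible element of the limit. It should not lie in the image of $T$, because $\Sym_R(\fp)$ is graded with $R$ in degree $0$, and an element equal to $1$ in every localization would have to be a degree-zero unit-type element obtained from $\fp$. Since $1\notin\fp$, this is impossible. This gives the contradiction, and so $\bigvee\chi_R(\fp)\neq\1$.

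The step I expect to be the main obstacle is exactly this reconciliation of the two localizations, together with making rigorous the graded argument that the compatible family $(g/g)$ is not in the image. Note that $\alpha(g)$ and the degree-one $g$ differ. In the local case I will try to use that, after inverting the degree-one element $g$, the element $\alpha(g)$ becomes related to it via the $R$-action, so that the gluing data still forces a preimage of degree one equal to $1$.

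Primality of $\cP$ remains: if $w,w'\notin\cP$, then $w\wedge w'\notin\cP$. Here I use locality. Write $w=\bigvee u_{h_i}$; then $w\not\leq\bigvee\chi_R(\fp)$ forces some $h_i\notin\fp$. Indeed, if all $h_i\in\fp$ then $w\leq\bigvee\chi_R(\fp)$. So $u_{h_i}=\1$, and hence $w=\1$. Thus the complement of $\cP$ is $\{\1\}$, which is trivially closed under finite meets. This completes the plan.
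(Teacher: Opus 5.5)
Your overall architecture matches the paper's. You reduce to the local case and observe that properness of $\cP$ and $\chi_R^{-1}(\cP)=\fp$ both follow from $\bigvee\chi_R(\fp)\neq\1$. You then show $\Lambda_R-\cP=\{\1\}$ by writing $w=\bigvee u_{h_i}$, and that last part is correct.

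The gap is the key step, which you leave open, and the argument you sketch for it fails. First, the mismatch you worry about is not there. By the convention stated before \autoref{lemma: covering condition for the Stone dual} ($T[h^{-1}]:=T[\alpha(h)^{-1}]$), the localizations $\Sym_R(I)[g^{-1}]$ in \autoref{lemma: symmetric algebra of an ideal} are at the degree-zero elements $\alpha(g)$, not at the degree-one copies $[g]$. So the paper simply combines the two lemmas. Second, your witness cannot work. If both numerator and denominator of $\frac gg$ are the degree-one element $[g]$, then $\frac{[g]}{[g]}$ is the unit of the localization. The family is then just the image of $1\in R=\Sym^0_R(\fp)$. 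Hence the assertion that ``an element equal to $1$ in every localization would have to come from $\fp$'' is false, and no contradiction arises. Your proposed repair, relating $\alpha(g)$ to $[g]$ after inverting $[g]$, does not help either, because \autoref{lemma: covering condition for the Stone dual} is about localizations at $\alpha(g)$.

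The witness that works is of mixed degree: $y=\bigl(\frac{[g]}{\alpha(g)}\bigr)_{g\in\fp}$, with numerator in degree one and denominator in degree zero. It is compatible because $\alpha(g')[g]=[gg']=\alpha(g)[g']$. It is a degree-one stand-in for ``$[1]$'', which exists only if $1\in\fp$.

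To make this rigorous, note that localizing at degree-zero elements preserves the grading: $\Sym_R(\fp)[\alpha(g)^{-1}]=\bigoplus_n\Sym^n_R(\fp)[g^{-1}]$. So if $\Psi$ were bijective, a preimage of $y$ could be taken of the form $[a]$ with $a\in\fp$. The equality $\frac{[a]}{1}=\frac{[g]}{\alpha(g)}$ unwinds to $g^{k+1}a=g^{k+1}$ in $R$, so $a$ and $1$ agree in every $R[g^{-1}]$. The degree-zero part of $\Psi$ is $R\to\lim R[g^{-1}]$, which is injective if $\Psi$ is; alternatively, apply \autoref{lemma: covering condition for the Stone dual} to $T=R$. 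Hence $a=1\in\fp$, a contradiction. With this, or simply by quoting \autoref{lemma: symmetric algebra of an ideal} as stated, your proof goes through and coincides with the paper's.
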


\begin{proof}
 By \autoref{lemma: localizations at primes}, we can assume that $\fp$ is the unique maximal $m$-ideal of $R$ and that $R^\times=R-\fp$. We claim that $\cS=\Lambda_R-\cP=\{\1\}$. The canonical morphism
 \[
  \Sym_{R}(\fp) \ \longrightarrow \ \lim\bigg(\prod_{g\in\fp} \Sym_R(\fp)[(g)^{-1}] \begin{array}{c}\longrightarrow\\[-7pt]\longrightarrow\end{array} \prod_{g,g'\in\fp} \Sym_R(\fp)[(gg')^{-1}]\bigg)
 \]
 is not an isomorphism by \autoref{lemma: symmetric algebra of an ideal}, since $\fp$ is a proper ideal of $R$. Thus $\bigvee\chi_R(\fp)\neq\1$ by \autoref{lemma: covering condition for the Stone dual}, and, in particular, $\1\in\cS$.

 Since $R[h^{-1}]=R$ for $h\in R^\times$, we have $u_h=\1$ for $h\in R$. Since $\fp=R-R^\times$, we have $w\leq\bigvee\chi_R(\fp)$ for all $w\neq\1$. Thus $\cS=\{\1\}$, which is closed under finite meets and shows that $\cP$ is a prime ideal. It shows further that $\chi_R^{-1}(\cP)=R-R^\times=\fp$, which completes the proof.
\end{proof}

%%%%%%%%%%%%%%%%%%%%%%%%%%%%%%%%%%%%%%%%%%%%%%%%%%%%%%%%%%%%%%%%%%%%%%%%%%%%%%%%%%%%%%%%%%%%%%%%%%%%%%%%%%%%%%%%%%%%%%%%%%%%%%%%%%%%%%%%%%%%%%%%%%%%%%%%%%%%%%%%%%%%%
\subsection{The inclusion of the Stone dual into the prime spectrum}
\label{subsetion: the inclusion of the Stone dual into the prime spectrum}

We turn to the proof of $\Lambda^\top_R\subset\PSpec R$. Let $\cP$ be a principal prime ideal of $\Lambda_R^\top$ and $P=\chi_R^{-1}(\cP)$ the corresponding prime $m$-ideal of $R$. We aim to show the following.

\begin{claim}
 The prime $m$-ideal $P$ is in $\PSpec R$.
\end{claim}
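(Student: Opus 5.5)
We need to show that $P$ is closed under addition. The strategy is to reduce to the local case and then feed a suitable algebra into the covering criterion of \autoref{lemma: covering condition for the Stone dual}.

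\emph{Reduction.} By \autoref{lemma: localizations at primes} we may replace $R$ by $S^{-1}R$ with $S=R-P$. So we assume that $P$ is the unique maximal $m$-ideal of $R$ and that $R-P=R^\times$. Under this assumption every $h\notin P$ is a unit, so $u_h=\1$. Since $\cP$ is a proper ideal, $\1\notin\cP$, and therefore $P=\chi_R^{-1}(\cP)$ is exactly $R-R^\times$. Moreover every $u_g$ with $g\in P$ lies in $\cP$. Hence $\bigvee\{u_g\mid g\in P\}\in\cP$ is different from $\1$.

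\emph{Main step.} Suppose $g,g'\in P$. We want $g+g'\in P$, and we argue by contradiction: assume $g+g'\notin P$, so $g+g'$ is a unit. We will show that $\{u_g,u_{g'}\}$ covers, i.e.\ $u_g\vee u_{g'}=\1$. This contradicts $u_g\vee u_{g'}\leq\bigvee\cP\neq\1$. Replacing $g,g'$ by $g(g+g')^{-1}$ and $g'(g+g')^{-1}$ (still in $P$), we may assume $g+g'=1$. By \autoref{lemma: covering condition for the Stone dual}, it suffices to check that for every $R$-algebra $T$ the sequence
\[
 T \ \longrightarrow \ T[g^{-1}]\times T[g'^{-1}] \ \rightrightarrows \ T[(gg')^{-1}]
\]
is an equalizer in $\SRings$. (The self-overlaps $T[g^{-2}]=T[g^{-1}]$ add nothing.) This is a purely algebraic statement about a semiring $T$ containing elements $g,g'$ with $g+g'=1$.

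\emph{Proving the equalizer property; the expected obstacle.} This is the usual partition-of-unity argument, and it is where the real work lies, since subtraction is unavailable.

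For injectivity, suppose $a,b\in T$ become equal in both localizations. Then $g^n a=g^n b$ and $g'^n a=g'^n b$ for some $n$. Expand $1=(g+g')^{2n}$. Every monomial is divisible by $g^n$ or by $g'^n$, so $a=(g+g')^{2n}a=(g+g')^{2n}b=b$. This uses only additivity and multiplication.

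For surjectivity, take a compatible pair $a/g^n$ and $b/g'^n$. Compatibility means $(gg')^m(g'^n a)=(gg')^m(g^n b)$ for some $m$; after enlarging exponents we may take the numerators so that $g'^N a=g^N b$ holds in $T$. Write $(g+g')^{2N}=\sum_{k}\binom{2N}{k}g^kg'^{2N-k}$ and define
\[
 t=\sum_{k\geq N}\tbinom{2N}{k}g^{k-N}g'^{2N-k}\,a\;+\;\sum_{k<N}\tbinom{2N}{k}g^{k}g'^{N-k}\,b .
\]
One then checks, using only $g'^Na=g^Nb$ and $g+g'=1$, that $g^N t=a$ in $T[g^{-1}]$ and symmetrically for $g'$. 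The bookkeeping of the exponents, with no cancellation available, is the delicate part.

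If this direct route becomes awkward, an alternative is to use that the pair $(g,g')$ with $g+g'=1$ is pulled back from the universal case $R_0=\N[x,y]/(x+y=1)$. One would verify the cover there via \autoref{lemma: covering condition for PSpec R} and the comparison with the classical structure sheaf, and then use that covers in $\cT$ are stable under base change. With the covering established, the contradiction yields $g+g'\in P$. Hence $P$ is an ideal, and therefore a prime ideal, so $P\in\PSpec R$.
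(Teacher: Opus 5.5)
Your proof is correct. Its skeleton matches the paper's: localize so that $R-P=R^\times$, suppose $P$ is not additively closed, and use \autoref{lemma: covering condition for the Stone dual} to obtain a join of classes $u_h$ with $h\in P$ that equals $\1$, although it lies in the principal ideal $\cP$.

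The difference is in how you get the universal equalizer condition. The paper takes the whole family $\{h\in P\}$ and notes that it generates the unit ideal. It then applies \autoref{lemma: covering condition for PSpec R} to $\PSpec T$ for every $R$-algebra $T$, and reads off the equalizer from the sheaf property of $\cO_{\PSpec T}$ together with $\cO_{\PSpec T}(\PSpec T)=T$. So all the algebra is outsourced to the structure-sheaf theory of \cite{GKMX}. You instead observe that closure under binary sums suffices, normalize to $g+g'=1$, and verify descent by hand.

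Your element $t$ is right, and the check is easier than you expect. Substituting $g^Nb=g'^Na$ into the second sum gives $g^Nt=(g+g')^{2N}a=a$ already in $T$, and symmetrically $g'^Nt=b$. So there is no delicate bookkeeping, and the fallback via $\N[x,y]/(x+y=1)$ is unnecessary.

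The paper's route is shorter given \cite{GKMX} and treats arbitrary families at once. Yours is self-contained and proves exactly the fragment of the sheaf property that is actually used.

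Your version has one further advantage. Only $s=g+g'$ needs to be inverted, so you can run the same computation for $R[s^{-1}]$-algebras. This shows that $\{U_{gs},U_{g's}\}$ covers $U_s$: the colimit of its atlas is $U_s$ and maps over $X$ to the colimit for $\{U_g,U_{g'}\}$. Hence $u_s\leq u_g\vee u_{g'}$, so $u_s\in\cP$ and $s\in P$, with no localization at all of $S$.

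This is worth doing. The reduction step, in the paper and in your proposal alike, needs to transport the hypothesis $P\in\Lambda_R^\top$ to $S^{-1}P\in\Lambda^\top_{S^{-1}R}$. \autoref{lemma: localizations at primes} only states the converse direction.
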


\begin{proof}
 By \autoref{lemma: localizations at primes}, we can assume that $P$ is the unique maximal $m$-ideal of $R$ and $R^\times=R-P$. Let $\fp$ be the ideal generated by $P$, which is the additive closure of $P$ and equal to either $P$ or $R$. If $P=\fp$, then $P$ is a prime ideal, as claimed. Thus it suffices to lead the assumption $\fp=R$ to a contradiction.

 If this was the case, then $\PSpec R$ is covered by the family $\{U_h^P\mid h\in P\}$ of open subsets by \autoref{lemma: covering condition for PSpec R}, and the same is true if we take inverse images under a morphism $f^\ast:\PSpec T\to\PSpec R$ induced by a semiring morphism $f:R\to T$. Thus $\PSpec T$ is equal to the union of the open subsets $f^\ast(U^P_h)$ glued along their pairwise intersections $f^\ast(U^P_{gh})$. In terms of their coordinate semirings, this means that the canonical semiring homomorphism
 \[
  T \ \longrightarrow \ \lim\bigg(\prod_{h\in P} T[h^{-1}] \begin{array}{c}\longrightarrow\\[-7pt]\longrightarrow\end{array} \prod_{g,h\in P} T[(gh)^{-1}] \bigg)
 \]
 is an isomorphism. By \autoref{lemma: covering condition for the Stone dual}, we conclude that $\bigvee\{u_h\mid h\in P\}=\1$, which is the desired contradiction since $\1\notin\cP$.
\end{proof}

This concludes the proof of \autoref{thmA}.

%%%%%%%%%%%%%%%%%%%%%%%%%%%%%%%%%%%%%%%%%%%%%%%%%%%%%%%%%%%%%%%%%%%%%%%%%%%%%%%%%%%%%%%%%%%%%%%%%%%%%%%%%%%%%%%%%%%%%%%%%%%%%%%%%%%%%%%%%%%%%%%%%%%%%%%%%%%%%%%%%%%%%
%%%%%%%%%%%%%%%%%%%%%%%%%%%%%%%%%%%%%%%%%%%%%%%%%%%%%%%%%%%%%%%%%%%%%%%%%%%%%%%%%%%%%%%%%%%%%%%%%%%%%%%%%%%%%%%%%%%%%%%%%%%%%%%%%%%%%%%%%%%%%%%%%%%%%%%%%%%%%%%%%%%%%

\section{Proof of \autoref{thmB}}
\label{ProofB}

In this section we explain the proof for \autoref{thmB}. In the favour of a more approachable text, we restrict ourselves to proof outlines of the more technical parts. We refer to \cite{toolkit} for full details.

%%%%%%%%%%%%%%%%%%%%%%%%%%%%%%%%%%%%%%%%%%%%%%%%%%%%%%%%%%%%%%%%%%%%%%%%%%%%%%%%%%%%%%%%%%%%%%%%%%%%%%%%%%%%%%%%%%%%%%%%%%%%%%%%%%%%%%%%%%%%%%%%%%%%%%%%%%%%%%%%%%%%%
\subsection{Morphisms and refinements of \texorpdfstring{$\fs$}{s}-presentations}
The main tools in the proof of \autoref{thmB} are morphisms and refinements of $\fs$-presentations, as introduced in the following.

Let $\cU:\cI\to\Aff$ and $\cV:\cJ\to\Aff$ be $\fs$-presentations. A \emph{morphism $\cV\to\cU$} is a pair of a functor $\underline\Phi:\cJ\to \cI$ between the respective index categories together with a natural transformation $\Phi:\cV\to\cU\circ\underline\Phi$. We typically refer to such a morphism by $\Phi:\cV\to\cU$, omitting $\underline\Phi$ from the notation. Note that $\Phi$ induces a morphism $\colim\Phi:\colim_{\Sh_\fs}\cV\to\colim_{\Sh_\fs}\cU$ between the respective colimits, which are $\fs$-schemes.

A \emph{refinement of an $\fs$-presentation $\cU$} is a certain type of morphisms $\Omega:\cW\to\cU$ of $\fs$-presentations, whose key property is that the induced morphism $\colim\Omega:\colim_{\Sh_\fs}\cW\to\colim_{\Sh_\fs}\cU$ is an isomorphism. We omit the exact definition from \cite{toolkit}, and rather use the validity of this key property as the working definition of a refinement in this text.

The proof of \autoref{thmB} rests on the following results on affine presentations and their refinements. Detailed proofs appear in \cite{toolkit}.

\begin{thm}\label{thm: refinements}
 Let $\cU$ and $\cV$ be $\fs$-presentations with respective colimits $X=\colim_{\Sh_\fs}\cU$ and $Y=\colim_{\Sh_\fs}\cV$ in $\Sh_\fs$. Let $\varphi:Y\to X$ be a morphism of sheaves on $\fs$. Then there is a refinement $\Omega:\cW\to\cV$ and a morphism $\Phi:\cW\to\cU$ such that $\varphi=\colim\Phi$, i.e., the diagram
 \[
  \begin{tikzcd}[column sep=80, row sep=15]
   Y \ar[r,"\varphi"] & X \\
   \colim_{\Sh_\fs}\cV \ar[u,"\simeq"'] \\
   \colim_{\Sh_\fs}\cW \ar[u,"\simeq"',"\colim\Omega"] \ar[r,"\colim\Phi"] & \colim_{\Sh_\fs}\cU \ar[uu,"\simeq"']
  \end{tikzcd}
 \]
 commutes.
\end{thm}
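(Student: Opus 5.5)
The plan is to build $\cW$ by pulling back the affine pieces of $\cU$ along $\varphi$, restricted to each affine piece of $\cV$. Refinement is then applied locally, piece by piece.

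\textbf{Step 1: reduction to a single affine piece.} Write $\cV:\cJ\to\Aff$ with objects $V_j$. Let $\iota_j:V_j\to Y$ be the canonical inclusions; these are monomorphisms because $\cV$ is monodromy free. Consider the composites $\varphi\circ\iota_j:V_j\to X$. Since $V_j$ is representable, the Yoneda lemma identifies $\varphi\circ\iota_j$ with a point $x_j\in X(V_j)$.

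\textbf{Step 2: local factorization through $\cU$.} The sheaf $X$ is the colimit in $\Sh_\fs$ of the presheaf colimit of $\cU$, so $X$ is the sheafification of the presheaf colimit. Hence every section of $X$ over an affine object locally lifts to the presheaf colimit. Concretely, there is a Zariski covering $\{V_{j,k}\to V_j\}_k$ by principal opens $V_{j,k}=\Spec \Gamma(V_j)[h_{k}^{-1}]$ such that each restriction $x_j|_{V_{j,k}}$ factors as $V_{j,k}\to U_{i(j,k)}\to X$ for some object $U_{i(j,k)}$ of $\cU$. We may choose the $h_k$ to range over all elements of $\Gamma(V_j)$ whose principal open maps into some $U_i$. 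Indeed, such a family still covers, since it contains a covering subfamily and coverings are closed under enlargement within the Zariski topology. This choice keeps the construction canonical enough to be functorial in $j$.

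\textbf{Step 3: assembling $\cW$.} Let $\cW$ be the diagram consisting of all such $V_{j,k}$ together with all iterated fibre products over the $V_j$ and along the morphisms of $\cV$. For a morphism $V_{j'}\to V_j$ in $\cV$, a principal open of $V_j$ pulls back to a principal open of $V_{j'}$, namely the localization at the image of $h$. Its map to $U_i$ is then the obvious composite. This gives a functor $\underline\Omega$ with $\Omega:\cW\to\cV$ and a functor $\underline\Phi$ with $\Phi:\cW\to\cU$. Closure under fibre products holds by construction. Monodromy freeness is inherited from $\cV$, because each $W\to V_j\to Y$ is a composite of monomorphisms. To make $\Phi$ a genuine natural transformation, the choice of $U_{i(j,k)}$ must be compatible with fibre products. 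I would handle this by indexing $\cW$ by pairs $(w,i)$ together with a chosen lift, so that lifts are part of the data. Any two lifts of the same section agree after passing to a finer cover, because the inclusions $U_i\to X$ are monic and $U_i\times_X U_{i'}$ is covered by objects of $\cU$. This uses closure of $\cU$ under fibre products.

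\textbf{Step 4: checking the two properties.} First, $\colim\Omega$ must be an isomorphism. For each $j$, the $V_{j,k}$ form a Zariski covering of $V_j$, so the colimit over the part of $\cW$ lying over $V_j$ recovers $V_j$ in $\Sh_\fs$, because covering sieves become epimorphic families in a sheaf topos. Commuting colimits then gives $\colim\cW\simeq\colim\cV=Y$. Second, $\varphi=\colim\Phi$: this holds because both composites agree on each $W$ by construction, and the $W$ jointly cover $\colim\cW$.

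The main obstacle is Step 3, namely making the locally chosen lifts to $\cU$ coherent so that $\Phi$ is strictly natural, rather than natural only up to further refinement. My first attempt is to include the lift in the indexing data, as described above, and check that fibre products of lifted pieces carry canonical lifts into $U_i\times_X U_{i'}$, which lies in $\cU$.
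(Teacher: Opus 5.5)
Your construction is sound in outline, and it is essentially a concrete, one-step version of the paper's sketch. The paper first forms the diagram $\overline\cW$ of all $U_i\times_X V_j$, naturally indexed by $\cI\times\cJ$, so that the maps to $\cU$ and $\cV$ are just the projections. Stability of colimits under pullback in $\Sh_\fs$ then gives $\colim\overline\cW\cong Y$ for free, and only afterwards is each (generally non-affine) $U_i\times_X V_j$ replaced by an affine presentation. You go straight to affine pieces. The principal opens $V_j[h^{-1}]$ mapping into some $U_i$ over $X$ are exactly the principal opens of $V_j$ contained in some $U_i\times_X V_j$, and local surjectivity of sheafification shows they cover. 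This is more explicit, but the gluing the paper gets from pullback-stability now has to be checked by hand, and two steps do not work as written.

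First, the claim that $U_i\times_X U_{i'}$ lies in $\cU$ is false. Closure under fibre products only provides $U_i\times_{U_k}U_{i'}$ for morphisms $U_i\to U_k\leftarrow U_{i'}$ in $\cU$, and $U_i\times_X U_{i'}$ need not even be affine: for the two charts of the affine plane with doubled origin it is the punctured plane. You do not need this claim if you index as follows. Objects are triples $(j,h,i)$ such that $V_j[h^{-1}]\to Y\to X$ factors through $U_i$. Morphisms $(j,h,i)\to(j',h',i')$ are pairs $(a\colon j\to j',\,b\colon i\to i')$ in $\cJ\times\cI$ such that $V_j[h^{-1}]\to V_{j'}$ factors through $V_{j'}[h'^{-1}]$. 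With this indexing:
\begin{enumerate}
 \item compatibility with $U_i\to U_{i'}$ is automatic, since $U_{i'}\to X$ is a monomorphism;
 \item $\underline\Omega$ and $\underline\Phi$ are the projections, hence strictly functorial;
 \item the fibre product of $(j_1,h_1,i_1)\to(j_3,h_3,i_3)\leftarrow(j_2,h_2,i_2)$ is the localization of $V_{j_1}\times_{V_{j_3}}V_{j_2}$ (an object of $\cV$) at the product of the images of $h_1,h_2$, and it maps into $U_{i_1}\times_{U_{i_3}}U_{i_2}$, which is an object of $\cU$.
\end{enumerate}

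Second, in Step 4 the covering property only shows that $\colim\cW\to Y$ is an epimorphism. ``Commuting colimits'' does not give injectivity, because pieces of $V_j$ with different lifts $i,i'$ are glued in $\colim\cW$ only along objects of $\cW$ that map to both. You must show the following: if $t\in W(T)$ and $t'\in W'(T)$ agree in $Y(T)$, then locally on $T$ they come from a common point of some $W''$ with morphisms to $W$ and $W'$ in $\cW$. This follows in two moves. Reduce zigzags in $\cV$ and in $\cU$ to spans $V_j\leftarrow V_{j''}\to V_{j'}$ and $U_i\leftarrow U_k\to U_{i'}$, using closure under fibre products; this is where your remark that $U_i\times_X U_{i'}$ is locally covered by objects of $\cU$ belongs. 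Then shrink to a principal open of $V_{j''}$ that maps into such a $U_k$. With these repairs, $\colim\Omega$ is an isomorphism. Monodromy freeness follows as you say, since $W\to\colim\cW\to Y$ is a monomorphism, and $\varphi=\colim\Phi$ holds leg by leg.
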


\begin{proof}[Proof idea]
 In a first attempt, we define $\overline\cW$ as the fibre product of $\cV$ and $\cU$ over $X$, i.e., $\overline\cW$ is the diagram that consists of all pairwise fibre products $U_i\times_X V_j$ of $U_i=\cU(i)$ and $V_j=\cV(j)$ over $X$, where $V_j\to X$ is the composition of the monomorphism $V_j\to Y$ with $\varphi:Y\to X$. It comes with canonical morphisms $\overline\cW\to\cV$ and $\overline\cW\to\cU$. Since $\colim_{\Sh_\fs}\cU\to X$ is an isomorphism, it follows that $\colim_{\Sh_\fs}\overline\cW\to\colim_{\Sh_\fs}\cV$ is an isomorphism. If $\overline\cW$ is a diagram in $\Aff$, then $\cW=\overline\cW$ together with the canonical morphisms $\Omega:\cW\to\cV$ and $\Phi:\cW\to\cU$ verifies the assertion of the theorem.

 The fibre products $U_i\times_X V_j$ are, however, in general not affine. This requires us to replace the $U_i\times_XV_j$ by $\fs$-presentations and the canonical projections by suitable morphisms, which piece together to an $\fs$-presentation $\cW$. In addition, $\cW$ comes with a morphism $\cW\to\overline\cW$ that induces an isomorphism on the respective colimits in $\Sh_\fs$. The compositions $\Omega:\cW\to\overline\cW\to\cV$ and $\Phi:\cW\to\overline\cW\to\cU$ satisfy the desired properties.
\end{proof}

\autoref{thm: refinements} has two immediate consequences.

\begin{cor}\label{cor: morphisms of s-schemes are affinely presented}
 Every morphism of $\fs$-schemes is the colimit of a morphism of $\fs$-presentations. \qed
\end{cor}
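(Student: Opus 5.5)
This is a direct application of \autoref{thm: refinements}. Let $\varphi:Y\to X$ be a morphism of $\fs$-schemes. By definition of an $\fs$-scheme, there are $\fs$-presentations $\cU$ and $\cV$ with isomorphisms $\colim_{\Sh_\fs}\cU\simeq X$ and $\colim_{\Sh_\fs}\cV\simeq Y$. After composing $\varphi$ with these identifications, we may regard $\varphi$ as a morphism $\colim_{\Sh_\fs}\cV\to\colim_{\Sh_\fs}\cU$ of sheaves on $\fs$.

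We then apply \autoref{thm: refinements}. It yields a refinement $\Omega:\cW\to\cV$ and a morphism $\Phi:\cW\to\cU$ of $\fs$-presentations for which the displayed diagram commutes. By the working definition of a refinement, $\colim\Omega$ is an isomorphism. Hence $\cW$ is itself an $\fs$-presentation of $Y$, via the composite isomorphism $\colim_{\Sh_\fs}\cW\to\colim_{\Sh_\fs}\cV\simeq Y$.

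Under the identifications $Y\simeq\colim_{\Sh_\fs}\cW$ and $X\simeq\colim_{\Sh_\fs}\cU$, the commutativity of the diagram states exactly that $\varphi$ corresponds to $\colim\Phi$. So $\varphi$ is the colimit of the morphism $\Phi:\cW\to\cU$ of $\fs$-presentations, up to the canonical isomorphisms.

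There is no real obstacle: all of the work sits in \autoref{thm: refinements}. The only point needing care is that "is the colimit of" must be read up to the chosen isomorphisms with the colimits. One must also check that $\cW$, being the source of a refinement, is an $\fs$-presentation in its own right, which holds by the definition of morphisms of $\fs$-presentations.
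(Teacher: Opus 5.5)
Your proposal is correct and follows the paper's route: the paper records this corollary, with \qed, as an immediate consequence of \autoref{thm: refinements} applied to chosen $\fs$-presentations of the source and target. Your two added remarks are the details the paper leaves implicit, namely that $\cW$ becomes an $\fs$-presentation of $Y$ via the isomorphism $\colim\Omega$, and that ``is the colimit of'' is meant up to these identifications.
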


Let $X$ be an $\fs$-scheme. An \emph{$\fs$-presentation of $X$} is an affine $\fs$-presentation $\cU$ together with an isomorphism $\alpha:\colim_{\Sh_\fs}\cU\to X$.

\begin{cor}\label{cor: common refinements}
 Any two $\fs$-presentations $\colim_{\Sh_\fs}\cU\simeq X$ and $\colim_{\Sh_\fs}\cV\simeq X$ of an $\fs$-scheme $X$ have a common refinement, i.e., there exists an $\fs$-presentation $\cW$ together with refinements $\Omega:\cW\to \cU$ and $\Theta:\cW\to\cV$.
\end{cor}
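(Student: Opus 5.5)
The plan is to apply \autoref{thm: refinements} to the identity morphism of $X$, read through the two given presentations. Let $\alpha:\colim_{\Sh_\fs}\cU\to X$ and $\beta:\colim_{\Sh_\fs}\cV\to X$ be the given isomorphisms. Put $Y=\colim_{\Sh_\fs}\cV$ and $X'=\colim_{\Sh_\fs}\cU$, and consider the isomorphism of sheaves
\[
 \varphi \ = \ \alpha^{-1}\circ\beta: \ Y \ \longrightarrow \ X'.
\]

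\autoref{thm: refinements} then produces an $\fs$-presentation $\cW$, a refinement $\Theta:\cW\to\cV$, and a morphism $\Phi:\cW\to\cU$ with $\colim\Phi=\varphi\circ\colim\Theta$. The first property we need is that $\colim\Phi$ is an isomorphism. This is immediate: $\colim\Theta$ is an isomorphism by the key property of refinements, $\varphi$ is an isomorphism by construction, and so their composite $\colim\Phi$ is an isomorphism as well.

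It remains to see that $\Phi$ is itself a refinement of $\cU$. Under the working definition adopted in the text, a refinement is a morphism of $\fs$-presentations whose induced map on colimits is an isomorphism, so $\Phi$ qualifies and we may set $\Omega=\Phi$. Both $\Omega$ and $\Theta$ are then compatible with the identifications with $X$, in the sense that
\[
 \alpha\circ\colim\Omega \ = \ \beta\circ\colim\Theta.
\]

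The main obstacle is the gap between this working definition and the precise definition of refinement in \cite{toolkit}. That definition likely imposes structural conditions on $\Phi$, for instance that each $W_k\to U_{\underline\Phi(k)}$ is a principal open immersion compatible with the indexing. To handle this, I would go back to the construction in the proof idea of \autoref{thm: refinements}. There $\cW$ is built from $\fs$-presentations of the fibre products $U_i\times_X V_j$, which are open subschemes of both $U_i$ and $V_j$. So $\cW\to\overline\cW\to\cU$ is constructed in exactly the same way as $\cW\to\overline\cW\to\cV$, and the construction is symmetric in $\cU$ and $\cV$ once $\varphi$ is an isomorphism. Hence whatever structural conditions make $\Theta$ a refinement should equally hold for $\Phi$.
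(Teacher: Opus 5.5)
Your proposal is correct and is the same argument as the paper's: apply \autoref{thm: refinements} to the identity of $X$ (your $\alpha^{-1}\circ\beta$ is just that identity read through the two presentations), and note that $\colim\Phi$ is then an isomorphism, so $\Phi$ is a refinement under the working definition. Your final paragraph on the precise definition in \cite{toolkit} goes beyond what the paper does and is only heuristic, but the argument does not need it.
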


\begin{proof}
 This follows by applying \autoref{thm: refinements} to the identity morphism $\varphi=\id_X$ of $Y=X$.
\end{proof}

%%%%%%%%%%%%%%%%%%%%%%%%%%%%%%%%%%%%%%%%%%%%%%%%%%%%%%%%%%%%%%%%%%%%%%%%%%%%%%%%%%%%%%%%%%%%%%%%%%%%%%%%%%%%%%%%%%%%%%%%%%%%%%%%%%%%%%%%%%%%%%%%%%%%%%%%%%%%%%%%%%%%%
\subsection{The image of \texorpdfstring{$\fs$}{s}-presentations}

Let $\tau:\Aff\to\Aff_\N$ be the equivalence with $\tau(\Spec R)=\PSpec R$. In this section, we study the image $\tau\cU=\tau\circ\cU$ of an $\fs$-presentation $\cU:\cI\to\Aff$, which is a diagram in $\Aff_\N$.

A \emph{principal open immersion of semiring schemes} is a morphism $\varphi:Y\to X$ of affine semiring schemes that is induced by the localization $R\to R[h^{-1}]$ of $R=\cO_X(X)$ at an element $h\in R$. By \cite[Lemma 3.3 and Thm.\ 3.18]{GKMX}, a principal open immersion is an open immersion (cf.\ \autoref{subsubsection: semiring schemes as semiringed spaces}). The following result establishes the last claim of \autoref{thmB}.

\begin{lemma}\label{lemma: image of s-presentations}
 Let $\cU:\cI\to\Aff$ be a diagram. Then the following are equivalent:
 \begin{enumerate}
  \item $\cU$ is an $\fs$-presentation;
  \item $\tau\cU:\cI\to\Aff_\N$ is a diagram of principal open immersions that is closed under fibre products such that the canonical inclusion $\tau\cU(i)\to\colim_{\Sp_\N}\tau\cU$ is an open immersion for every $i\in\cI$.
 \end{enumerate}
\end{lemma}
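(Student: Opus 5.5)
The conditions in (1) and (2) match term by term, except monodromy freeness. So the plan is to translate each part of the definition of an $\fs$-presentation through $\tau$, and to spend the real effort on that one condition.

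\emph{Easy translations.} Since $\tau$ is an equivalence, a morphism in $\cU$ is a principal open immersion $\Spec R[h^{-1}]\to\Spec R$ if and only if its image under $\tau$ is a principal open immersion of semiring schemes. By \cite[Lemma 3.3 and Thm.\ 3.18]{GKMX}, $\tau$ sends principal opens to open immersions onto $U^P_h$. Fibre products of principal opens in $\Aff$ are computed by $R[h^{-1}]\otimes_R R[g^{-1}]\simeq R[(hg)^{-1}]$. In $\Aff_\N$ they are computed by the intersection $U^P_h\cap U^P_g=U^P_{hg}$, and these two computations correspond under $\tau$. Hence $\cU$ is closed under fibre products if and only if $\tau\cU$ is.

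\emph{Monodromy freeness, strategy.} It remains to show that each $U_i\to\colim_{\Sh_\fs}\cU$ is a monomorphism if and only if each $\tau U_i\to\colim_{\Sp_\N}\tau\cU$ is an open immersion. I would first write down $\colim_{\Sp_\N}\tau\cU$ explicitly. Its underlying space is the quotient $Z$ of $\coprod_i\PSpec R_i$ by the relation generated by the transition maps. Its structure sheaf assigns to an open $V\subset Z$ the limit of $\cO_{\tau U_i}(\iota_i^{-1}V)$ over $\cI$. This colimit may a priori leave the category of semiring schemes, which is why (2) takes it in $\Sp_\N$.

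\emph{Monodromy freeness, (2)$\Rightarrow$(1).} If the $\iota_i$ are open immersions, then $Z$ is covered by the opens $\tau U_i$, whose pairwise intersections are the images of the fibre products $\tau U_{ij}$. So $Z$ is a semiring scheme glued from the affines $\tau U_i$. I would then show that its functor of points $T\mapsto\operatorname{Hom}(\PSpec T,Z)$ is a Zariski sheaf, using \autoref{thmA} and \autoref{lemma: covering condition for PSpec R} to match covering families. This sheaf receives monomorphisms from the $U_i$ and is the colimit of $\cU$ in $\Sh_\fs$. Concretely, a section of the colimit sheaf over $\Spec T$ is locally a map into some $U_i$, and these local maps glue uniquely because the inclusions into $Z$ are injective. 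Hence $\cU$ is monodromy free.

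\emph{Monodromy freeness, (1)$\Rightarrow$(2).} This is the main obstacle. If $U_i\to X=\colim_{\Sh_\fs}\cU$ is a monomorphism, then $U_i\times_XU_j$ is the subsheaf $U_i\cap U_j$. I must show that this intersection equals the union of the images of those $U_k$ in $\cU$ that map to both $U_i$ and $U_j$, i.e.\ that the equivalence relation on points generated by the transition maps does not identify more than $U_i\times_XU_j$ predicts. I would evaluate on points by using local semirings: \autoref{lemma: localizations at primes} reduces to $T$ with a unique maximal $m$-ideal. By \autoref{thmA} and \autoref{lemma: covering condition for the Stone dual}, every Zariski cover of $\Spec T$ then contains $\Spec T$ itself, so $X(T)=\colim_{\cI}U_i(T)$ as sets. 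Points of $Z$ are exactly such local maps $\Spec\cO_{Z,z}$. Monicity of $U_i\to X$ therefore gives injectivity of $\PSpec R_i\to Z$. Openness follows because $\iota_j^{-1}(\iota_i(\PSpec R_i))$ is the union of the images of the $U_{ij}$-type objects, which are principal opens. Finally, the structure sheaf of $Z$ restricts to $\cO_{\tau U_i}$ by the limit formula together with closure under fibre products. The delicate bookkeeping is to check that the generated equivalence relation on stalks agrees with the set-theoretic colimit $\colim_{\cI}U_i(T)$; I would handle it by filtering $\cI$ through the fibre-product closure.
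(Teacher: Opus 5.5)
Your outline is sound, but for monodromy freeness it takes a different route from the paper. The paper never computes $\colim_{\Sh_\fs}\cU$. It invokes an intrinsic characterization of monodromy freeness in terms of $\cU$ as a functor into $\Aff$ (\cite[section 3.2]{BLMV}) and transports it along the equivalence $\tau$. It then asserts that the same property is equivalent to the $\iota_i:\tau\cU(i)\to\colim_{\Sp_\N}\tau\cU$ being monomorphisms in $\Sp_\N$. Finally it upgrades these monomorphisms to open immersions using injectivity, the colimit topology, and openness of principal open immersions.

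You instead compare the two colimits through functors of points:
\begin{itemize}
\item For (2)$\Rightarrow$(1) you identify $\colim_{\Sh_\fs}\cU$ with $T\mapsto\mathrm{Hom}(\PSpec T,Z)$. Local surjectivity comes from the $\iota_i$ being open immersions covering $Z$. Local injectivity comes from their being monomorphisms in $\Sp_\N$ together with the transition maps being open.
\item For (1)$\Rightarrow$(2) you evaluate at local semirings, where sheafification is invisible. There monicity in $\Sh_\fs$ becomes injectivity of $U_i(T)\to\colim_{\cI}U_k(T)$, which separates points joined by a zigzag.
\end{itemize}
The paper's argument is shorter and purely formal, but it outsources exactly the intrinsic criterion and its $\Sp_\N$-analogue. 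Yours is more self-contained and uses \autoref{thmA} essentially, to match $\cT$-covers with open covers of $\PSpec T$. It derives injectivity directly rather than via monomorphisms in $\Sp_\N$. As a by-product it identifies $\colim_{\Sh_\fs}\cU$ with the functor of points of $Z$, which is the natural quasi-inverse of $\hat\tau$ in \autoref{thmB}.

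Some details need fixing.
\begin{itemize}
\item The claim that every Zariski cover of $\Spec T$ contains $\Spec T$ once $T$ has a unique maximal $m$-ideal is false as stated. Every nonzero semiring has one, namely $T-T^\times$, and $\{U_2,U_3\}$ covers $\Spec\mathbb{Z}$. What you need is that $T-T^\times$ is an ideal, as for $T=\cO_{\tau\cU(i),x}$ with $x$ a prime ideal; then any family generating $T$ contains a unit.
\item Use this $T$, transported along the zigzag by the isomorphisms of local semirings induced by the open transition maps. Do not argue with $\cO_{Z,z}$, which you do not control before knowing that the $\iota_i$ are open immersions.
\item For an open embedding you need $\iota_i(V)$ open for every open $V\subseteq\tau\cU(i)$, not only for $V=\tau\cU(i)$. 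The same saturation argument gives this.
\item Matching the structure sheaves uses monodromy freeness as well as closure under fibre products. Two morphisms $W\rightrightarrows U_i$ in $\cU$ with the same composite to $\colim_{\Sh_\fs}\cU$ must coincide, so the locally defined sections glue.
\end{itemize}
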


\begin{proof}[Proof idea]
 By \autoref{thmA}, principal open immersions $U_h\to \Spec R$ of affine $\fs$-schemes correspond to principal open immersions $U_h^P\to \PSpec R$ of prime spectra. Since $\Aff_\N$ is equivalent to $\Aff$, the diagram $\cU$ is closed under fibre products if and only if $\tau\cU$ is.

 The monodromy freeness of $\cU$ can be characterized intrinsically in terms of the properties of $\cU$ as a functor into $\Aff$; cf.\ \cite[section 3.2]{BLMV}. Since $\tau:\Aff\to\Aff_\N$ is an equivalence, $\cU$ satisfies this intrinsic property if and only if $\tau\circ\cU$ satisfies the corresponding property with respect to $\Aff_\N$. Similarly as for $\cU$, this intrinsic property for $\tau\circ\cU$ is equivalent to the property that the canonical inclusions $\iota_i:\tau \cU(i)\to \colim_{\Sp_\N}\tau\cU$ are monomorphisms for all $i\in\cI$. Thus $\cU$ is an $\fs$-presentation if and only if $\tau\circ\cU$ is a diagram of principal open immersions and $\iota_i$ is a monomorphism for all $i\in\cI$.

 We are left with showing that $\iota_i$ is an open topological embedding. To start with, monomorphisms in $\Sp_\N$ are injective maps. By the defining property of the colimit of topological spaces, a subset $W$ of $X=\colim_{\Sp_\N}\tau\cU$ is open if and only if $\iota_i^{-1}(W)$ is open in $\tau\cU(i)$ for all $i\in\cI$. Since principal open immersions are open topological embeddings, it suffices to verify this condition for a covering of $W$ by subsets of the form $\iota_i(\tau\cU(i))\cap W$. In particular, if $W$ is contained in $\iota_i(\tau\cU(i))$ for a single $i\in\cI$, then $W$ is open in $X$ if and only if $\iota_i^{-1}(W)$ is open in $\tau\cU(i)$. This shows that $\iota_i$ is indeed an open topological embedding.

 That the structure sheaf of $U_i$ is the restriction of the structure sheaf of $X$ follows in a similar way from the fact that this is the case for the principal open immersions in $\tau\cU$.
\end{proof}

\begin{lemma}\label{lemma: image of refinements under tau}
 Let $\Omega:\cW\to\cU$ be a refinement of $\fs$-presentations. Then $\colim_{\Sp_\N}\tau\Omega:\colim_{\Sp_\N}\tau\cW\to\colim_{\Sp_\N}\tau\cU$ is an isomorphism.
\end{lemma}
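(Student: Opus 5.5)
The plan is to reduce the statement to a local, affine question in which \autoref{thmA} and the covering lemmas can be applied. Set $X=\colim_{\Sp_\N}\tau\cU$ and $Y=\colim_{\Sp_\N}\tau\cW$, and write $\omega=\colim_{\Sp_\N}\tau\Omega:Y\to X$. By \autoref{lemma: image of s-presentations}, both $\tau\cU$ and $\tau\cW$ satisfy condition (2). Hence the canonical inclusions $\tau\cU(i)\to X$ and $\tau\cW(j)\to Y$ are open immersions, and their images cover $X$ and $Y$ respectively, since a colimit of topological spaces is covered by the images of its pieces. It therefore suffices to show three things: that $\omega$ is bijective, that $\omega$ is open, and that $\omega^\#$ is an isomorphism of structure sheaves. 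All three are local on $X$, so we may check them over a single chart $\tau\cU(i)=\PSpec R_i$.

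\textbf{Step 1: reduce to a single chart.} Fix $i\in\cI$ and let $U_i=\cU(i)$. Pull back the refinement along $U_i\to\colim_{\Sh_\fs}\cU$. Since $\colim\Omega$ is an isomorphism in $\Sh_\fs$, the preimage of $U_i$ is again $U_i$. Concretely, the pieces $W_j=\cW(j)$ with $\underline\Omega(j)=i$, together with the pieces $W_j\times_{U_{\underline\Omega(j)}}$ of the preimage, give affine opens of $U_i$. In the cleanest case these are principal opens $\Spec R_i[h^{-1}]$ for $h$ ranging over some family $\{h_k\}$. Their images form an open subscheme of $U_i$ that equals $\1$ in $\Lambda_{R_i}$, because $\colim\Omega$ is an isomorphism. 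By \autoref{lemma: covering condition for the Stone dual} and \autoref{thmA}, the corresponding sets $U^P_{h_k}$ then cover $\PSpec R_i$, and the analogous statement holds for their pairwise intersections.

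\textbf{Step 2: conclude on each chart.} Over $\PSpec R_i$, the part of $Y$ lying above it is the colimit in $\Sp_\N$ of the principal opens $U^P_{h_k}$ glued along $U^P_{h_kh_l}$; the monodromy-freeness provided by \autoref{lemma: image of s-presentations} is what makes this gluing well defined. Since these sets form a genuine open cover of $\PSpec R_i$, and $\cO_{\PSpec R_i}$ is a sheaf, this glued semiringed space is canonically isomorphic to $\PSpec R_i$ itself. So $\omega$ restricts to an isomorphism over each $\tau\cU(i)$. These local isomorphisms are compatible on overlaps, since everything is induced by $\Omega$, and therefore glue to a global isomorphism $\omega$.

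\textbf{Main obstacle.} The hard step is Step 1. A general refinement maps $W_j$ into $U_{\underline\Omega(j)}$ only through some morphism of $\Aff$, and not necessarily through a principal open. We must show that the preimage of $U_i$ in $\colim\cW$ is covered by affine pieces that are principal opens in $U_i$, or at least that it is again the colimit of an $\fs$-presentation mapping to $U_i$. The approach I would try first is to use that each $W_j\to\colim\cW\simeq\colim\cU$ is a monomorphism. Composing with the monomorphism $U_{\underline\Omega(j)}\to\colim\cU$ then shows that $W_j\to U_{\underline\Omega(j)}$ is itself a monomorphism onto an open subscheme. I would then reduce to principal opens by covering each $W_j$ further and using \autoref{cor: common refinements}. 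After that, \autoref{thmA} transfers the covering condition from $\Lambda_{R_i}$ to $\PSpec R_i$.
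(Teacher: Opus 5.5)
Your proposal has a genuine gap: the step that carries the lemma, namely injectivity of $\omega$, is asserted rather than proved.

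\textbf{Step 2 assumes injectivity.} You claim that $\omega^{-1}(\tau\cU(i))$ is the colimit of the $U^P_{h_k}$ glued along the $U^P_{h_kh_l}$. But $Y=\colim_{\Sp_\N}\tau\cW$ glues the pieces $\tau\cW(j)$ only along the morphisms of the diagram $\cW$. Its fibre products are taken over objects of $\cW$, not over $\colim\cU$ or inside $\PSpec R_i$. So the statement that two pieces of $Y$ overlap exactly where their images overlap in $\PSpec R_i$ is the injectivity of $\omega$ itself.

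Monodromy-freeness, via \autoref{lemma: image of s-presentations}, only says that each single $\tau\cW(j)\to Y$ is an open embedding. It says nothing about identifications between different pieces. What you must do is transfer the sheaf-level fact $W_j\times_{\colim\cW}W_{j'}\simeq W_j\times_{\colim\cU}W_{j'}$ to points of the topological colimit. In $\Sh_\fs$, two $T$-points of $W_j$ and $W_{j'}$ with the same image in $\colim\cW$ are linked by zigzags in $\cW$ only locally on a Zariski cover of $\Spec T$. \autoref{thmA} together with \autoref{lemma: covering condition for the Stone dual} gives you only the covering statement, i.e.\ surjectivity of $\omega$. A natural way to attack injectivity would be to evaluate at localizations $R_\fp$ at prime ideals. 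By the Claim in the proof of $\PSpec R\subset\Lambda_R^\top$, every Zariski cover of $\Spec R_\fp$ contains a unit, so sheafification does not change values there. This argument still has to be carried out, including producing the matching $R_\fp$-point of $W_{j'}$.

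\textbf{Step 1 is done only in the ``cleanest case''.} The preimage of $U_i$ is covered by the pieces $W_j\times_{\colim\cU}U_i$ for all $j$, not just those with $\underline\Omega(j)=i$. These pieces need not be affine, since the intersection of two affine charts need not be affine in non-separated situations.

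The inference in your ``main obstacle'' is false. That $W_j\to U_{\underline\Omega(j)}$ is a monomorphism in $\Aff$ does not make it an open immersion. Already for rings, $\Spec\mathbb{Q}\to\Spec\mathbb{Z}$ is a monomorphism that is not open. You would need $W_j\to\colim\cW$ to be an open immersion in the Zariski sense. Even then, $W_j$ would only be an affine open of $U_{\underline\Omega(j)}$, not necessarily a principal one.

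\textbf{Comparison with the paper.} The paper does not derive the lemma from the working definition at all. It says explicitly that the proof relies on the more restrictive definition of refinement from \cite{toolkit}. The explicit conditions in that definition let one check directly, and in parallel, that both $\colim_{\Sh_\fs}\Omega$ and $\colim_{\Sp_\N}\tau\Omega$ are isomorphisms. By using only the hypothesis that $\colim_{\Sh_\fs}\Omega$ is an isomorphism, you have taken on the harder task of transferring that information to $\Sp_\N$. That transfer is exactly what your proposal leaves open.
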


\begin{proof}[Remarks on the proof]
 While this statement is true for our working definition of refinements as morphisms of $\fs$-presentations whose colimit in $\Sh_\fs$ is an isomorphism, its proof requires the more restrictive definition that we omit from this text. This definition is based on explicit conditions that allow us to deduce (with a moderate effort) that both $\colim_{\Sh_\fs}\Omega$ as well as $\colim_{\Sp_\N}\tau\Omega$ are isomorphisms.
\end{proof}

%%%%%%%%%%%%%%%%%%%%%%%%%%%%%%%%%%%%%%%%%%%%%%%%%%%%%%%%%%%%%%%%%%%%%%%%%%%%%%%%%%%%%%%%%%%%%%%%%%%%%%%%%%%%%%%%%%%%%%%%%%%%%%%%%%%%%%%%%%%%%%%%%%%%%%%%%%%%%%%%%%%%%
\subsection{Uniqueness of \texorpdfstring{$\hat\tau$}{t}}

We turn to the proof of \autoref{thmB}, which is divided into the upcoming sections. We begin with the uniqueness of $\hat\tau:\Sch_\fs\to\Sch_\N$, which follows, in essence, from the property that it it preserves the colimits of affine presentations.

On the level of objects, this follows since every $\fs$-scheme $X$ is, by definition, the colimit of an affine $\fs$-presentation $\cU$, and thus $\hat\tau X\simeq\colim_{\Sp_\N}\tau\cU$ determines $\hat\tau X$ uniquely (up to isomorphism). On the level of morphisms $\varphi:Y\to X$, this follows since $\varphi$ is the colimit of a morphism $\Phi:\cV\to\cU$ of $\fs$-presentations, and thus $\hat\tau\varphi:\hat\tau Y\to\hat\tau X$ is the colimit of $\tau\Phi:\tau\cV\to\tau\cU$. These requirements determine $\hat\tau$ uniquely, up to a uniquely determined natural equivalence.

%%%%%%%%%%%%%%%%%%%%%%%%%%%%%%%%%%%%%%%%%%%%%%%%%%%%%%%%%%%%%%%%%%%%%%%%%%%%%%%%%%%%%%%%%%%%%%%%%%%%%%%%%%%%%%%%%%%%%%%%%%%%%%%%%%%%%%%%%%%%%%%%%%%%%%%%%%%%%%%%%%%%%
\subsection{Existence of \texorpdfstring{$\hat\tau$}{t}}

We turn to the proof that there is a functor $\hat\tau:\Sch_\fs\to\Sch_\N$ that extends $\tau$ and preserves the colimits of $\fs$-presentations. For this, we have to show that the formulas $\hat\tau X\simeq \colim_{\Sp_\N}\tau\cU$ and $\hat\tau\varphi\simeq \colim_{\Sp_\N}\tau\Phi$ are independent of the choice of an $\fs$-presentation $\cU$ of $X$ and of a choice of a morphism $\Phi$ of $\fs$-presentations with $\varphi=\colim_{\Sh_\fs}\Phi$.

Assume that $X\simeq\colim_{\Sh_\fs}\cV$ for a second $\fs$-presentation $\cV$. We want to show that $\colim_{\Sp_\N}\tau\cU\simeq\colim_{\Sp_\N}\tau\cV$. By \autoref{cor: common refinements}, $\cU$ and $\cV$ have a common refinement $\cW$. By \autoref{lemma: image of refinements under tau}, the image of the refinements $\cW\to\cU$ and $\cW\to\cV$ induce isomorphisms
\[
 \colim_{\Sp_\N}\tau\cU \ \stackrel\simeq\longleftarrow \ \colim_{\Sp_\N}\tau\cV \ \stackrel\simeq\longrightarrow \ \colim_{\Sp_\N}\tau\cV,
\]
which establish the desired claim.

Consider a morphism $\varphi:Y\to X$ that is the colimit of two morphisms $\Phi_i:\cV_i\to\cU_i$ of $\fs$-presentations for $X$ and $Y$ (for $i=1,2$), i.e., the diagram
\[
 \begin{tikzcd}[column sep=80, row sep=15]
  \colim_{\Sh_\fs}\cV_1 \ar[d,"\simeq"'] \ar[r,"\colim_{\Sh_\fs}\Phi_1"] & \colim_{\Sh_\fs}\cU_1\ar[d,"\simeq"'] \\
  Y \ar[r,"\varphi"] & X \\
  \colim_{\Sh_\fs}\cV_2 \ar[u,"\simeq"] \ar[r,"\colim_{\Sh_\fs}\Phi_2"] & \colim_{\Sh_\fs}\cU_2\ar[u,"\simeq"]
 \end{tikzcd}
\]
commutes. We aim to show that applying $\hat\tau$ yields a commutative diagram in $\Sp_\N$. This follows from choosing first common refinements $\cV$ of $\cV_1$ and $\cV_2$ and $\cU$ of $\cU_1$ and $\cU_2$, and then choosing a further refinement $\cW\to\cV$ that allows a morphism $\Phi:\cW\to\cU$ with $\varphi=\colim_{\Sh_\fs}\Phi$. This yields a commutative diagram
\[
 \begin{tikzcd}[column sep=80, row sep=15]
  \colim_{\Sp_\N}\tau\cV_1 \ar[r,"\colim_{\Sp_\N}\tau\Phi_1"] & \colim_{\Sh_\fs}\tau\cU_1 \\
  \colim_{\Sp_\N}\tau\cW \ar[r,"\colim_{\Sp_\N}\tau\Phi"] \ar[u,"\simeq"] \ar[d,"\simeq"] & \colim_{\Sp_\N}\tau\cU \ar[u,"\simeq"] \ar[d,"\simeq"] \\
  \colim_{\Sp_\N}\tau\cV_2 \ar[r,"\colim_{\Sp_\N}\tau\Phi_2"] & \colim_{\Sp_\N}\tau\cU_2,
 \end{tikzcd}
\]
which establishes the desired claim.

%%%%%%%%%%%%%%%%%%%%%%%%%%%%%%%%%%%%%%%%%%%%%%%%%%%%%%%%%%%%%%%%%%%%%%%%%%%%%%%%%%%%%%%%%%%%%%%%%%%%%%%%%%%%%%%%%%%%%%%%%%%%%%%%%%%%%%%%%%%%%%%%%%%%%%%%%%%%%%%%%%%%%
\subsection{Surjectivity of \texorpdfstring{$\hat\tau$}{t}}

Next we show that the functor $\hat\tau:\Sch_\fs\to \Sch_\N$ is essentially surjective. Let $Z$ be a semiring scheme. Then $Z$ has an affine open cover $Z=\bigcup W_i$ by affine semiring schemes $W_i$, and each intersection $W_i\cap W_j=W_i\times_XW_j$ is covered by principal open subschemes $W_{ijk}$. The collection of all $W_i$ and $W_{ijk}$, together with the principal open immersions $W_{ijk}\to W_i$ form a diagram $\cW'$, whose colimit in $\Sp_\N$ is $X$. Adding repeatedly all fibre products to $\cW'$ yields a diagram $\cW$ of principal open immersions that is closed under fibre products, whose colimit in $\Sp_\N$ is equal to that of $\overline\cW$ since $\overline\cW$ is a cofinal subdiagram of $\cW$.

Let $\cU=\tau^{-1}\circ\cW$, where $\tau^{-1}$ is an inverse to $\tau$, i.e., $\cW\simeq\tau\cU$. By \autoref{lemma: image of s-presentations}, $\cU$ is an $\fs$-presentation. Thus
\[
 Z \ \simeq \ \colim_{\Sp_\N}\cW \ \simeq \ \colim_{\Sp_\N}\tau\cU \ \simeq \ \hat\tau(\colim_{\Sh_\fs}\cU),
\]
which shows that $\hat\tau$ is essentially surjective.

%%%%%%%%%%%%%%%%%%%%%%%%%%%%%%%%%%%%%%%%%%%%%%%%%%%%%%%%%%%%%%%%%%%%%%%%%%%%%%%%%%%%%%%%%%%%%%%%%%%%%%%%%%%%%%%%%%%%%%%%%%%%%%%%%%%%%%%%%%%%%%%%%%%%%%%%%%%%%%%%%%%%%
\subsection{Fullness of \texorpdfstring{$\hat\tau$}{t}}

Next we show that $\hat\tau:\Sch_\fs\to \Sch_\N$ is full. Let $X$ and $Y$ be $\fs$-schemes with respective $\fs$-presentations $X\simeq\colim_{\Sh_\fs}\cU$ and $Y\simeq\colim_{\Sh_\fs}\cV$. Let $\psi:\hat\tau Y\to\hat\tau X$ be a morphism of semiring schemes. We aim to show that $\psi=\hat\tau\varphi$ for a morphism $\varphi:Y\to X$.

By the defining property of $\hat\tau$, $X=\colim_{\Sp_\N}\tau\cU$ and $Y=\colim_{\Sp_\N}\tau\cV$ and by \autoref{lemma: image of s-presentations}, the canonical inclusions $\tau\cU(i)\to X$ and $\tau\cV(j)\to Y$ are open immersions. Then the inverse image $\psi^{-1}(W)$ of an open subset $W\subset X$ is open in $Y$ and can thus be covered by affine open subschemes of $Y$.

If we choose such affine open coverings for the inverse images of $\iota_i(\tau\cU(i))$ under $\psi$ in a compatible way for all $i$, then this yields a refinement $\cW\to\cV$ that allows for a morphism $\Phi:\cW\to\cU$ whose colimit $\varphi:Y\to X$ satisfies $\hat\tau\varphi=\colim_{\Sp_\N}\tau\Phi=\varphi$, as desired. We omit the details of this last step, which are analogous to the proof of \autoref{thm: refinements}.

%%%%%%%%%%%%%%%%%%%%%%%%%%%%%%%%%%%%%%%%%%%%%%%%%%%%%%%%%%%%%%%%%%%%%%%%%%%%%%%%%%%%%%%%%%%%%%%%%%%%%%%%%%%%%%%%%%%%%%%%%%%%%%%%%%%%%%%%%%%%%%%%%%%%%%%%%%%%%%%%%%%%%
\subsection{Faithfulness of \texorpdfstring{$\hat\tau$}{t}}

We are left to show that $\hat\tau:\Sch_\fs\to\Sch_\N$ is faithful. Consider two morphisms $\varphi_1,\varphi_2:Y\to X$ of $\fs$-schemes with $\hat\tau\varphi_1=\hat\tau\varphi_2$. We choose $\fs$-presentations $X\simeq \colim_{\Sh_\fs}\cU$ and $Y\simeq \colim_{\Sh_\fs}\cV$. By \autoref{thm: refinements}, there are refinements $\cW_1\to \cV$ and $\cW_2\to\cV$ and morphisms $\Phi_1':\cW_1\to\cU$ and $\Phi_2':\cW_2\to\cU$ whose colimits are $\varphi_1$ and $\varphi_2$, respectively.

Passing to a common refinement $\cW$ of $\cW_1$ and $\cW_2$ yields morphisms $\Phi_1:\cW\to\cW_1\to\cU$ and $\Phi_2:\cW\to\cW_2\to\cU$ whose respective colimits in $\Sh_\fs$ are $\varphi_1$ and $\varphi_2$. By assumption, we have
\[
 \colim_{\Sp_\N}\tau\Phi_1 \ = \ \hat\tau\varphi_1 \ = \ \hat\tau\varphi_2 \ = \ \colim_{\Sp_\N} \tau\Phi_2.
\]
Since the canonical inclusions $\tau\Phi_i:\tau\cU(i)\to \tau Y$ are monomorphisms, we have $\tau\Phi_1(i)=\tau\Phi_2(i)$ for all $i\in\cI$, and thus $\Phi_1(i)=\Phi_2(i)$ since $\tau$ is an equivalence. This shows that $\Phi_1=\Phi_2$ and thus $\varphi_1=\varphi_2$, as desired.

This concludes the proof of \autoref{thmB}.

%%%%%%%%%%%%%%%%%%%%%%%%%%%%%%%%%%%%%%%%%%%%%%%%%%%%%%%%%%%%%%%%%%%%%%%%%%%%%%%%%%%%%%%%%%%%%%%%%%%%%%%%%%%%%%%%%%%%%%%%%%%%%%%%%%%%%%%%%%%%%%%%%%%%%%%%%%%%%%%%%%%%%
%%%%%%%%%%%%%%%%%%%%%%%%%%%%%%%%%%%%%%%%%%%%%%%%%%%%%%%%%%%%%%%%%%%%%%%%%%%%%%%%%%%%%%%%%%%%%%%%%%%%%%%%%%%%%%%%%%%%%%%%%%%%%%%%%%%%%%%%%%%%%%%%%%%%%%%%%%%%%%%%%%%%%
\begin{small}
 \bibliographystyle{plain}
 \bibliography{f1}

\begin{thebibliography}{1}

\bibitem{BLMV}
Sourayan Banerjee, Oliver Lorscheid, Alejandro Martínez~Méndez, and Alejandro
  Vargas.
\newblock Toolkit for the algebraic geometer.
\newblock Lecture notes of a minicourse from the 2025 Barcelona workshop on the
  Geometry of Semirings. Online available at \arxiv{2603.26500}, 2026.

\bibitem{Chu-Lorscheid-Santhanam12}
Chenghao Chu, Oliver Lorscheid, and Rekha Santhanam.
\newblock Sheaves and {$K$}-theory for {$\mathbb F_1$}-schemes.
\newblock {\em Adv. Math.}, 229(4):2239--2286, 2012.

\bibitem{GKMX}
Roberto Gualdi, Arne Kuhrs, Mayo Mayo~Garcia, and Xavier Xarles.
\newblock Affine scheme theory for commutative semirings.
\newblock Lecture notes of a minicourse from the 2025 Barcelona workshop on the
  Geometry of Semirings. Online available at \arxiv{2601.14136}, 2026.

\bibitem{toolkit}
Oliver Lorscheid and Alejandro Martínez~Méndez.
\newblock Toolkit for the algebraic geometer.
\newblock Paper with two parts, in preparation, 2026.

\bibitem{Marty07}
Florian Marty.
\newblock Relative {Z}ariski open objects.
\newblock {\em J. K-Theory}, 10(1):9--39, 2012.

\bibitem{Toen-Vaquie09}
Bertrand To{\"e}n and Michel Vaqui{\'e}.
\newblock Au-dessous de {${\rm Spec}\,\mathbb Z$}.
\newblock {\em J. K-Theory}, 3(3):437--500, 2009.

\end{thebibliography}
\end{small}
%%%%%%%%%%%%%%%%%%%%%%%%%%%%%%%%%%%%%%%%%%%%%%%%%%%%%%%%%%%%%%%%%%%%%%%%%%%%%%%%%%%%%%%%%%%%%%%%%%%%%%%%%%%%%%%%%%%%%%%%%%%%%%%%%%%%%%%%%%%%%%%%%%%%%%%%%%%%%%%%%%%%%
%%%%%%%%%%%%%%%%%%%%%%%%%%%%%%%%%%%%%%%%%%%%%%%%%%%%%%%%%%%%%%%%%%%%%%%%%%%%%%%%%%%%%%%%%%%%%%%%%%%%%%%%%%%%%%%%%%%%%%%%%%%%%%%%%%%%%%%%%%%%%%%%%%%%%%%%%%%%%%%%%%%%%

\end{document}